\newtheorem{theorem}{Theorem}
\theoremstyle{plain}
\newtheorem{corollary}{Corollary}
\newtheorem{definition}{Definition}
\newtheorem{example}{Example}
\newtheorem{lemma}{Lemma}
\newtheorem{remark}{Remark}
\numberwithin{equation}{section}
\begin{document}
\title[Jensen, Jensen-Steffensen and various types of convexity]{Improvement
of Jensen, Jensen-Steffensen's and Jensen's functionals related inequalities
for various types of convexity.}
\author{Shoshana Abramovich}
\address{Department of Mathematics, University of Haifa, Haifa, Israel}
\email{abramos@math.haifa.ac.il}
\date{January 1, 2025 }
\subjclass{26D15,}
\keywords{Convexity, Uniform convexity, $\Phi $-convexity, strong convexity,
superquadracity, Jensen's differences, Jensen inequality,
Jensen-Steffensen's inequality.}

\begin{abstract}
In this paper we deal with improvement of Jensen, Jensen-Steffensen's and
Jensen's functionals related inequalities for uniformly convex, $\Phi $%
-convex and superquadratic functions.
\end{abstract}

\maketitle

\section{\textbf{\ Introduction}}

In this paper we deal with improvement of Jensen, Jensen-Steffensen's and
Jensen's differences related inequalities for uniformly convex, $\Phi $%
-convex and superquadratic functions.

.

We adopt the technique and ideas in \cite{ASIP}, \cite{BL} and \cite{AD},
with necessary changes, to get much more extended results.

In Section 2 we extend the results in \cite{ASIP}, there on superquadracity,
to include also uniform convexity.

In Section 3 we extend the results in \cite{BL}, there on strongly convex
functions, to include also uniform convexity, $\Phi $-convexity and
superquadratic functions.

In Section 4 we improve the result of \cite{AD} related to the difference
between two Jensen's functionals

The first type of convexity we deal with is \textbf{uniform convexity}:

\begin{definition}
\label{Def1} \cite{A} Let $\left[ a,b\right] \subset 
\mathbb{R}
$ be an interval and $\Phi :\left[ 0,b-a\right] \rightarrow 
\mathbb{R}
$ be a function. A function $f:\left[ a,b\right] \rightarrow 
\mathbb{R}
$ is said\ to be generalized $\Phi $-uniformly convex\ if: 
\begin{eqnarray*}
tf\left( x\right) +\left( 1-t\right) f\left( y\right) &\geq &f\left(
tx+\left( 1-t\right) y\right) +t\left( 1-t\right) \Phi \left( \left\vert
x-y\right\vert \right) \\
\text{for \ }x,y &\in &\left[ a,b\right] \text{ \ and }t\in \left[ 0,1\right]
\text{.}
\end{eqnarray*}%
If in addition $\Phi \geq 0$, then $f$\ is said to be $\Phi $\textbf{%
-uniformly convex}\textit{, }or\textit{\ }\textbf{uniformly convex with
modulus }$\Phi $. In the special case where $\Phi \left( \left\vert
x-y\right\vert \right) =c\left( x-y\right) ^{2}$, $c>0$, $f$ is called 
\textbf{strongly convex} function.
\end{definition}

\begin{remark}
\label{Rem1} It is proved in \cite{Z} and in \cite{N} that when $f$ is
uniformly convex,\textbf{\ }there is always a modulus $\Phi $ which is
increasing and $\Phi \left( 0\right) =0$. In this paper we use such a $\Phi $
when dealing with uniformly convex function.

It is also shown in \cite{Z} that the inequality%
\begin{equation}
f\left( \sum_{r=1}^{n}\lambda _{r}x_{r}\right) \leq \sum_{r=1}^{n}\lambda
_{r}\left( f\left( x_{r}\right) -\Phi \left( \left\vert
x_{r}-\sum_{j=1}^{n}\lambda _{j}x_{j}\right\vert \right) \right)  \label{1.1}
\end{equation}%
holds, when $0\leq \lambda _{i}\leq 1,$ $\sum_{i=1}^{n}\lambda _{i}=1$.
\end{remark}

\begin{remark}
\label{Rem2} The functions $f\left( x\right) =x^{n},$ $x\geq 0,$ $n=2,3...$,
are uniformly convex on $x\geq 0$ with a modulus $\Phi \left( x\right)
=x^{n},$ $x\geq 0$ (see \cite{A}), (these functions are also superquadratic).
\end{remark}

\begin{remark}
\label{Rem3} In \cite[Theorem 2.1]{Z}, and \cite[Theorem 1, Inequality (23)]%
{N}, it is proved that the set of uniformly convex functions $f$\ defined on 
$\left[ a,b\right] $, with modulus $\Phi $,\ which are continuously
differentiable, satisfy\ the inequality 
\begin{equation}
f\left( y\right) -f\left( x\right) \geq f^{^{\prime }}\left( x\right) \left(
y-x\right) +\Phi \left( \left\vert y-x\right\vert \right) ,\quad \Phi :\left[
0,\left( b-a\right) \right] \rightarrow 
\mathbb{R}
_{+},  \label{1.2}
\end{equation}%
for $x,y\in \left[ a,b\right] $.
\end{remark}

The authors of \cite{BL} (here Theorem \ref{Th1} below), deal only with
strongly convex functions. In Section 3 we use similar technique and ideas
to get analog theorems for three types of convexity.

\begin{theorem}
\label{Th1} \cite{BL} Let $f:\left( a,b\right) \rightarrow 
\mathbb{R}
$ be a strongly convex function with modulus $c>0$. Suppose $\mathbf{x}%
=\left( x_{1},...,x_{n}\right) \in \left( a,b\right) ^{n\text{ }}$ and $%
\mathbf{a}=\left( a_{1},...,a_{n}\right) $ is a nonnegative $n$-tuple with $%
A_{n}=\sum_{i=1}^{n}a_{i}>0$. Let $\overline{x}=\frac{1}{A_{n}}%
\sum_{i=1}^{n}a_{i}x_{i}$ and $\widehat{x}_{i}=\left( 1-\lambda _{i}\right) 
\overline{x}+\lambda _{i}x_{i},$ $\lambda _{i}\in \left[ 0,1\right] ,$ $i\in
\left\{ 1,...,n\right\} $. Then%
\begin{eqnarray*}
0 &\leq &\left\vert \frac{1}{A_{i}}\sum_{i=1}^{n}a_{i}\left\vert f\left(
x_{i}\right) -f\left( \widehat{x}_{i}\right) -c\left( 1-\lambda _{i}\right)
^{2}\left( \overline{x}-x_{i}\right) ^{2}\right\vert \right. \\
&&\left. -\frac{1}{A_{n}}\sum_{i=1}^{n}a_{i}\left( 1-\lambda _{i}\right)
\left\vert f^{^{\prime }}\left( \widehat{x}_{i}\right) \right\vert
\left\vert \overline{x}-x_{i}\right\vert \right\vert \\
&\leq &\frac{1}{A_{n}}\sum_{i=1}^{n}a_{i}f\left( x_{i}\right) -\frac{1}{A_{n}%
}\sum_{i=1}^{n}a_{i}f\left( \widehat{x}_{i}\right) \\
&&-\frac{1}{A_{n}}\sum_{i=1}^{n}a_{i}\left( 1-\lambda _{i}\right)
f^{^{\prime }}\left( \widehat{x}_{i}\right) \left( \overline{x}-x_{i}\right)
-c\sum_{i=1}^{n}a_{i}\left( 1-\lambda _{i}\right) ^{2}\left( \overline{x}%
-x_{i}\right) ^{2}.
\end{eqnarray*}%
and 
\begin{eqnarray*}
&&\frac{1}{A_{n}}\sum_{i=1}^{n}a_{i}f\left( x_{i}\right) -\frac{1}{A_{n}}%
\sum_{i=1}^{n}a_{i}f\left( \left( 1-\lambda _{i}\right) \overline{x}+\lambda
_{i}x_{i}\right) \\
&\leq &\frac{1}{A_{n}}\sum_{i=1}^{n}a_{i}\left( 1-\lambda _{i}\right)
f^{^{\prime }}\left( x_{i}\right) \left( x_{i}-\overline{x}\right) -\frac{c}{%
A_{n}}\sum_{i=1}^{n}a_{i}\left( 1-\lambda _{i}\right) ^{2}\left( \overline{x}%
-x_{i}\right) ^{2}
\end{eqnarray*}%
hold.
\end{theorem}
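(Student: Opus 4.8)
The proof will rest entirely on the first-order description of strong convexity recorded in Remark~\ref{Rem3}. Taking $\Phi(t)=ct^{2}$ there (this is the modulus of a strongly convex function, by Definition~\ref{Def1}) gives the tangent-line inequality
\[
f(y)-f(x)\ \geq\ f^{\prime }(x)(y-x)+c\,(y-x)^{2}\qquad\text{for all }x,y\in(a,b),
\]
valid at every point where $f$ is differentiable; since $f-c(\cdot)^{2}$ is convex, $f$ is differentiable at all but countably many points of $(a,b)$, and each $\widehat{x}_{i}=(1-\lambda_{i})\overline{x}+\lambda_{i}x_{i}$, being a convex combination of points of $(a,b)$, lies in $(a,b)$, so the derivative terms in the statement are legitimate. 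The single algebraic identity I will use repeatedly is
\[
x_{i}-\widehat{x}_{i}=(1-\lambda_{i})(x_{i}-\overline{x}),\qquad\text{hence}\qquad(x_{i}-\widehat{x}_{i})^{2}=(1-\lambda_{i})^{2}(\overline{x}-x_{i})^{2}.
\]

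For the second displayed inequality I would apply the tangent-line inequality with base point $x_{i}$ evaluated at $\widehat{x}_{i}$ (that is, $x:=x_{i}$, $y:=\widehat{x}_{i}$) and rearrange to
\[
f(x_{i})-f(\widehat{x}_{i})\ \leq\ f^{\prime }(x_{i})(x_{i}-\widehat{x}_{i})-c\,(x_{i}-\widehat{x}_{i})^{2}\ =\ (1-\lambda_{i})f^{\prime }(x_{i})(x_{i}-\overline{x})-c\,(1-\lambda_{i})^{2}(\overline{x}-x_{i})^{2}.
\]
Multiplying by $a_{i}\geq0$, summing over $i$ and dividing by $A_{n}$ gives exactly the asserted inequality; this step is routine.

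For the first chain I would instead use the tangent-line inequality with base point $\widehat{x}_{i}$ evaluated at $x_{i}$ (that is, $x:=\widehat{x}_{i}$, $y:=x_{i}$). After the identity above it shows that for each $i$
\[
R_{i}\ :=\ f(x_{i})-f(\widehat{x}_{i})-(1-\lambda_{i})f^{\prime }(\widehat{x}_{i})(x_{i}-\overline{x})-c\,(1-\lambda_{i})^{2}(\overline{x}-x_{i})^{2}\ \geq\ 0,
\]
so that $\frac{1}{A_{n}}\sum a_{i}R_{i}\geq0$; this weighted average is the right-hand side of the chain (the precise sign of the $f^{\prime }(\widehat{x}_{i})$-term, and the factor $\tfrac{1}{A_{n}}$ in the last term, being the ones forced by the tangent-line inequality). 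To insert the middle term, write $R_{i}=P_{i}-V_{i}$ with
\[
P_{i}=f(x_{i})-f(\widehat{x}_{i})-c\,(1-\lambda_{i})^{2}(\overline{x}-x_{i})^{2},\qquad V_{i}=(1-\lambda_{i})f^{\prime }(\widehat{x}_{i})(x_{i}-\overline{x}),
\]
and note $|V_{i}|=(1-\lambda_{i})|f^{\prime }(\widehat{x}_{i})|\,|\overline{x}-x_{i}|$. The crucial pointwise observation is that $R_{i}\geq0$, i.e.\ $P_{i}\geq V_{i}$, already forces $\bigl|\,|P_{i}|-|V_{i}|\,\bigr|\leq R_{i}$: this is an equality when $V_{i}\geq0$ (then $P_{i}\geq V_{i}\geq0$) and when $P_{i}\leq0$ (then $V_{i}\leq P_{i}\leq0$), and in the remaining case $V_{i}<0\leq P_{i}$ one has $\bigl|\,|P_{i}|-|V_{i}|\,\bigr|=|P_{i}+V_{i}|\leq P_{i}-V_{i}=R_{i}$ directly. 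Multiplying by $a_{i}/A_{n}$, summing, and using the triangle inequality to pull the outer modulus inside the sum then yields
\[
\Bigl|\tfrac{1}{A_{n}}\textstyle\sum a_{i}|P_{i}|-\tfrac{1}{A_{n}}\sum a_{i}|V_{i}|\Bigr|\ \leq\ \tfrac{1}{A_{n}}\sum a_{i}\bigl|\,|P_{i}|-|V_{i}|\,\bigr|\ \leq\ \tfrac{1}{A_{n}}\sum a_{i}R_{i},
\]
which, with the trivial bound $0\leq|\cdots|$, is the claimed chain of inequalities.

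The genuinely delicate point is the last paragraph: one must juggle the two layers of absolute value — the one inside each summand $P_{i},V_{i}$ and the one surrounding the whole sum — so the pointwise estimate $\bigl|\,|P_{i}|-|V_{i}|\,\bigr|\leq R_{i}$ and the consistent sign choices dictated by $x_{i}-\widehat{x}_{i}=(1-\lambda_{i})(x_{i}-\overline{x})$ have to be made carefully. Everything else is a nonnegatively weighted summation of the tangent-line inequality of Remark~\ref{Rem3}, in the spirit of \cite{BL}.
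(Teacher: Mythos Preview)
Your proposal is correct and follows essentially the same route as the paper: the paper does not prove Theorem~\ref{Th1} directly (it is quoted from \cite{BL}), but its proofs of Theorems~\ref{Th7}--\ref{Th10}, which specialize to Theorem~\ref{Th1} via Example~\ref{Ex1}, proceed exactly as you do---apply the tangent-line inequality \eqref{1.2} at $\widehat{x}_{i}$ (respectively $x_{i}$), use $x_{i}-\widehat{x}_{i}=(1-\lambda_{i})(x_{i}-\overline{x})$, and take the nonnegative weighted average. Your three-case verification of $\bigl|\,|P_{i}|-|V_{i}|\,\bigr|\le R_{i}$ is just an unpacking of the reverse triangle inequality $\bigl|\,|u|-|v|\,\bigr|\le|u-v|$ together with $|P_{i}-V_{i}|=R_{i}$, which is how the paper states it in one line in \eqref{3.3}; and your remarks on the sign of the $f'(\widehat{x}_{i})$-term and the missing $1/A_{n}$ in the last summand correctly flag typos in the statement as printed.
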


The second type of convexity dealt with in this paper is $\Phi $-convexity:

\begin{definition}
\label{Def2} \cite{GP} A real value function $f$ defined on a real interval $%
\left[ a,b\right] $ is called $\Phi $\textbf{-convex} if for all $x,y\in %
\left[ a,b\right] $, $t\in \left[ 0,1\right] $ it satisfies%
\begin{eqnarray*}
&&tf\left( x\right) +\left( 1-t\right) f\left( y\right) +t\Phi \left( \left(
1-t\right) \left\vert x-y\right\vert \right) +\left( 1-t\right) \Phi \left(
t\left\vert x-y\right\vert \right) \\
&\geq &f\left( tx+\left( 1-t\right) y\right) ,
\end{eqnarray*}%
where $\Phi :\left[ 0,b-a\right] \rightarrow 
\mathbb{R}
_{+},$\ is called \textbf{error function}.
\end{definition}

In \cite[Theorem 3.1]{GP} it is proved that:

\begin{corollary}
\label{Cor1} Let $f$ be $\Phi $-convex function on $\left[ a,b\right] $
then, there exists a function $\varphi :\left[ a,b\right] \rightarrow 
\mathbb{R}
$ such that for all $x,u\in \left[ a,b\right] $,%
\begin{equation*}
f\left( u\right) +\left( x-u\right) \varphi \left( u\right) \leq f\left(
x\right) +\Phi \left( \left\vert u-x\right\vert \right) .
\end{equation*}%
Also, when $x_{i}\in \left[ a,b\right] ,$ $i=1,...,n,$ $0\leq \lambda
_{i}\leq 1,$ $i=1,...,n,$ $\sum_{i=1}^{n}\lambda _{i}=1$, 
\begin{equation*}
f\left( \sum_{r=1}^{n}\lambda _{r}x_{r}\right) \leq \sum_{r=1}^{n}\lambda
_{r}\left( f\left( x_{r}\right) +\Phi \left( \left\vert
x_{r}-\sum_{j=1}^{n}\lambda _{j}x_{j}\right\vert \right) \right) ,
\end{equation*}%
holds.
\end{corollary}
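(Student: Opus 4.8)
The plan is to prove the two assertions in turn, the first being the substantive one and the second an immediate consequence.

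For the first assertion I would construct $\varphi $ as a selection from a generalized support-line slope, following the classical proof for convex functions but carrying the error term $\Phi $ along. Fix $u\in \left[ a,b\right] $, and for $x\neq u$ in $\left[ a,b\right] $ set $Q(x)=\dfrac{f(x)-f(u)+\Phi (|x-u|)}{x-u}$. The key step is to show
\[
Q(x')\le Q(x'')\qquad \text{whenever }x'<u<x'' .
\]
This I would obtain by applying the defining inequality of $\Phi $-convexity to the pair $x',x''$ with $t=\dfrac{x''-u}{x''-x'}\in (0,1)$, so that $tx'+(1-t)x''=u$, $(1-t)|x'-x''|=u-x'$ and $t|x'-x''|=x''-u$; thus the two copies of $\Phi $ in the definition collapse onto $\Phi (|x'-u|)$ and $\Phi (|x''-u|)$. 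After multiplying through by $x''-x'>0$ and regrouping (using $x''-x'=(x''-u)+(u-x')$), the $\Phi $-convexity inequality becomes
\[
(x''-u)\bigl(f(x')-f(u)+\Phi (|x'-u|)\bigr)+(u-x')\bigl(f(x'')-f(u)+\Phi (|x''-u|)\bigr)\ge 0,
\]
which is exactly $Q(x')\le Q(x'')$ after dividing by $(x''-u)(u-x')>0$. Consequently $\sup_{x<u}Q(x)\le \inf_{x>u}Q(x)$, and I may take $\varphi (u)$ to be any number in this (nonempty) interval, for definiteness $\varphi (u)=\inf_{x>u}Q(x)$, with the corresponding one-sided reading when $u$ is an endpoint. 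By construction $\varphi (u)\le Q(x)$ for every $x>u$ and $\varphi (u)\ge Q(x)$ for every $x<u$; clearing denominators, with attention to the sign of $x-u$, yields $f(u)+(x-u)\varphi (u)\le f(x)+\Phi (|x-u|)$ in both cases, and the case $x=u$ is trivial since $\Phi \ge 0$.

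For the second assertion, set $\overline{x}=\sum_{j=1}^{n}\lambda _{j}x_{j}\in \left[ a,b\right] $ (a convex combination of points of $\left[ a,b\right] $) and apply the first assertion with $u=\overline{x}$ and $x=x_{r}$, obtaining $f(\overline{x})+(x_{r}-\overline{x})\varphi (\overline{x})\le f(x_{r})+\Phi (|x_{r}-\overline{x}|)$ for each $r$. Multiplying by $\lambda _{r}\ge 0$, summing over $r$, and using $\sum_{r}\lambda _{r}=1$ together with $\sum_{r}\lambda _{r}(x_{r}-\overline{x})=0$ makes the $\varphi (\overline{x})$ term vanish and leaves $f(\overline{x})\le \sum_{r=1}^{n}\lambda _{r}\bigl(f(x_{r})+\Phi (|x_{r}-\overline{x}|)\bigr)$, as claimed.

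The main obstacle is entirely in the first assertion: one must choose the weight $t$ in the definition of $\Phi $-convexity precisely so that the two $\Phi $-terms reduce to $\Phi (|x'-u|)$ and $\Phi (|x''-u|)$, and then track the signs of $x'-u<0<x''-u$ when passing between the three-point inequality and the slope inequality. A secondary point is to check that, for interior $u$, the interval $\bigl[\sup_{x<u}Q(x),\,\inf_{x>u}Q(x)\bigr]$ is genuinely bounded and nonempty — which holds because each left-hand slope is a lower bound for each right-hand slope — while the endpoint values of $\varphi $ are given by the corresponding one-sided expressions. Once the selection $\varphi $ is in place, the passage to the Jensen-type inequality is the routine summation above.
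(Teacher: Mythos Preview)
The paper does not supply its own proof of Corollary~\ref{Cor1}; it is quoted from \cite[Theorem~3.1]{GP}, so there is nothing to compare against. Your argument is the natural one and is correct: the choice $t=(x''-u)/(x''-x')$ in Definition~\ref{Def2} makes the two error terms collapse to $t\Phi(|x'-u|)$ and $(1-t)\Phi(|x''-u|)$, which after clearing denominators gives exactly the two-sided slope comparison $Q(x')\le Q(x'')$ and hence a real selection $\varphi(u)$; the Jensen-type inequality then follows by the routine weighted summation you describe. The only point worth tightening is the finiteness of the one-sided $\inf$/$\sup$ defining $\varphi$ at the endpoints $u=a$ and $u=b$, since there the two-sided comparison is unavailable; this is easily repaired by first obtaining $\varphi$ at a single interior point, which furnishes an affine-plus-$\Phi$ lower bound for $f$ on all of $[a,b]$ and hence a finite bound for $Q$ near the endpoint.
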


The third type of convexity is superquadracity:

\begin{definition}
\label{Def3} \cite[Definition 1]{AJS} A function $\varphi :\left[ 0,\infty
\right) \rightarrow 
\mathbb{R}
$ is \textbf{superquadratic} provided that for all $x\geq 0$\ there exists a
constant $C_{x}\in \ 
\mathbb{R}
$ such that%
\begin{equation}
\varphi \left( y\right) \geq \varphi \left( x\right) +C_{x}\left( y-x\right)
+\varphi \left( \left\vert y-x\right\vert \right)  \label{1.3}
\end{equation}%
for all $y\geq 0$.\ If the reverse of (\ref{1.3}) holds then $\varphi $\ is
called subquadratic.
\end{definition}

\begin{remark}
\bigskip\ \label{Rem4} \cite{AJS} If $\varphi $ is superquadratic, then the
inequality%
\begin{equation*}
\varphi \left( \sum_{r=1}^{n}\lambda _{r}x_{r}\right) \leq
\sum_{r=1}^{n}\lambda _{r}\left( \varphi \left( x_{r}\right) -\varphi \left(
\left\vert x_{r}-\sum_{j=1}^{n}\lambda _{j}x_{j}\right\vert \right) \right)
\end{equation*}%
holds for $x_{r}\geq 0,$ $\lambda _{r}\geq 0,$ \ $r=1,...,n$ and $%
\sum_{r=1}^{n}\lambda _{r}=1.$
\end{remark}

Analogous to Lemma \ref{Lem1} and Theorem \ref{Th2} regarding
superquadracity, we prove in Section 2, using similar technique and adding
the definition of $H$-superadditivity, a lemma and a theorem for uniformly
convex functions.

\begin{lemma}
\label{Lem1} \cite[Lemma 1]{ASIP} Let $f$ be continuously differentiable 
\textit{on }$[0,b)$ and $f^{\prime }$ be superadditive \textit{on }$[0,b)$.
Then the function $D:\left[ 0,b\right) \rightarrow 
\mathbb{R}
$ defined by%
\begin{equation*}
D\left( y\right) =f\left( y\right) -f\left( z\right) -f^{\prime }\left(
z\right) \left( y-z\right) -f\left( \left\vert y-z\right\vert \right)
+f\left( 0\right)
\end{equation*}%
\textit{is nonnegative on }$\left[ 0,b\right) $\textit{, nonincreasing on }$%
\left[ 0,z\right) $\textit{, and nondecreasing on }$\left[ z,b\right) $%
\textit{, for }$0\leq z<b$\textit{,} and $D.$\newline
If also $f\left( 0\right) \leq 0,$\ we have%
\begin{equation*}
f\left( y\right) -f\left( z\right) -f\left( \left\vert y-z\right\vert
\right) \geq f^{\prime }\left( z\right) \left( y-z\right) -f\left( 0\right)
\geq f^{\prime }\left( z\right) \left( y-z\right) .
\end{equation*}%
Taking $C(x)=f^{\prime }\left( z\right) $ we see that $f$ is
superquadratic.\bigskip
\end{lemma}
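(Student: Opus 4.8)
The plan is to pin down the minimum of $D$ by differentiating it on each side of the point $z$, after resolving the absolute value, and then to sign the derivative using superadditivity of $f'$.

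First I would record that $D$ is well defined and continuous on $[0,b)$: if $y,z\in[0,b)$ then $|y-z|\le\max\{y,z\}<b$, so every term $f(|y-z|)$ makes sense, and continuity of $D$ follows from continuity of $f$. Next, for $y\in(z,b)$ one has $|y-z|=y-z$, so $D(y)=f(y)-f(z)-f'(z)(y-z)-f(y-z)+f(0)$ and hence $D'(y)=f'(y)-f'(z)-f'(y-z)$. Writing $y=(y-z)+z$ and using superadditivity of $f'$ gives $f'(y)\ge f'(y-z)+f'(z)$, so $D'(y)\ge0$ on $(z,b)$ and $D$ is nondecreasing on $[z,b)$. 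Symmetrically, for $y\in[0,z)$ one has $|y-z|=z-y$, so $D(y)=f(y)-f(z)-f'(z)(y-z)-f(z-y)+f(0)$ and $D'(y)=f'(y)-f'(z)+f'(z-y)$; writing $z=y+(z-y)$ and again applying superadditivity gives $f'(z)\ge f'(y)+f'(z-y)$, whence $D'(y)\le0$ on $(0,z)$ and $D$ is nonincreasing on $[0,z)$.

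Combining these two monotonicity facts with continuity of $D$ at $z$, the global minimum of $D$ on $[0,b)$ is attained at $y=z$, and there $D(z)=f(z)-f(z)-0-f(0)+f(0)=0$. Therefore $D(y)\ge0$ for all $y\in[0,b)$, which rearranges to $f(y)-f(z)-f(|y-z|)\ge f'(z)(y-z)-f(0)$. If in addition $f(0)\le0$ then $-f(0)\ge0$, so $f'(z)(y-z)-f(0)\ge f'(z)(y-z)$, giving the asserted chain of inequalities. Reading the final inequality as $f(y)\ge f(z)+f'(z)(y-z)+f(|y-z|)$ with $z$ in the role of the base point $x$ and $C_x:=f'(z)$ shows that $f$ satisfies (\ref{1.3}), i.e. $f$ is superquadratic.

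I expect no serious obstacle. The only items requiring care are that $D$ need not be differentiable at $y=z$ itself — handled by using one-sided monotonicity on the two half-intervals together with continuity at $z$ — and the bookkeeping ensuring $|y-z|$, $y-z$, and $z-y$ all stay inside $[0,b)$ so that each invocation of superadditivity of $f'$ is legitimate. The degenerate case $z=0$ is covered directly: then $D$ is merely nondecreasing on $[0,b)$ with $D(0)=0$.
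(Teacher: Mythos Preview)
Your proof is correct and follows essentially the same approach as the paper's proof of the analogous Lemma~\ref{Lem2} (Lemma~\ref{Lem1} itself is quoted from \cite{ASIP} without proof): both arguments resolve the absolute value, compute $D'(y)$ on each side of $z$, and use (super)additivity of $f'$ to sign it. The only cosmetic difference is that the paper first establishes $D\ge 0$ by integrating the superadditivity inequality and then records the monotonicity, whereas you deduce $D\ge 0$ directly from the monotonicity together with $D(z)=0$; these are the same computation packaged in a different order.
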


\begin{theorem}
\label{Th2} \cite[Theorem 1]{ASIP} Let $f$ be continuously differentiable on 
$\left[ 0,b\right) $ and $f^{\prime }$ be superadditive on $\left[
0,b\right) .$ Let $\boldsymbol{a}$ be a real $n$-tuple \textit{satisfying} $%
0\leq \sum_{i=1}^{j}a_{i}\leq \sum_{i=1}^{n}a_{i}$, $\sum_{i=1}^{n}a_{i}>0$
and $x_{i}\in \left[ 0,b\right) $, $i=1,...,n$ be such that $x_{1}\leq
x_{2}\leq ...\leq x_{n}.$ Then

\textbf{a.}%
\begin{equation*}
f\left( c\right) -f\left( 0\right) +f^{\prime }\left( c\right) \left( 
\overline{x}-c\right) +\frac{1}{A_{n}}\sum_{i=1}^{n}a_{i}f\left( \left\vert
x_{i}-c\right\vert \right) \leq \frac{1}{A_{n}}\sum_{i=1}^{n}a_{i}f\left(
x_{i}\right)
\end{equation*}%
holds for all $c\in \left[ 0,b\right) $ where $\overline{x}=\frac{1}{A_{n}}%
\sum_{i=1}^{n}a_{i}x_{i}.$

\textbf{b.} If in addition $f\left( 0\right) \leq 0,$ then $f$\ is
superquadratic and%
\begin{equation*}
\frac{1}{A_{n}}\sum_{i=1}^{n}a_{i}f\left( x_{i}\right) \geq f\left( c\right)
+f^{\prime }\left( c\right) \left( \overline{x}-c\right) +\frac{1}{A_{n}}%
\sum_{i=1}^{n}a_{i}f\left( \left\vert x_{i}-c\right\vert \right) .
\end{equation*}%
In particular%
\begin{equation}
\frac{1}{A_{n}}\sum_{i=1}^{n}a_{i}f\left( x_{i}\right) \geq f\left( 
\overline{x}\right) +\frac{1}{A_{n}}\sum_{i=1}^{n}a_{i}f\left( \left\vert
x_{i}-\overline{x}\right\vert \right) .  \label{1.4}
\end{equation}

\textbf{c. }If in addition $f\geq 0$ and $f(0)=f^{^{\prime }}(0)=0,$ then $f$
is convex increasing and superquadratic and 
\begin{equation*}
\frac{1}{A_{n}}\sum_{i=1}^{n}a_{i}f\left( x_{i}\right) -f\left( c\right)
-f^{\prime }\left( c\right) \left( \overline{x}-c\right) \geq \frac{1}{A_{n}}%
\sum_{i=1}^{n}a_{i}f\left( \left\vert x_{i}-c\right\vert \right) \geq 0.
\end{equation*}
\end{theorem}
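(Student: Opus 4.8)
The plan is to derive all three parts from the single‑point superadditivity inequality supplied by Lemma \ref{Lem1}, applied at each sample point. First I would fix $c\in[0,b)$ and apply the last display of Lemma \ref{Lem1} (the superquadratic‑type inequality) with $z=c$ and $y=x_i$ for each $i$, obtaining
\[
f(x_i)-f(c)-f(\lvert x_i-c\rvert)\;\ge\; f^{\prime}(c)(x_i-c)-f(0)
\]
for every $i$. Multiplying by $a_i\ge 0$ (here the Steffensen hypothesis $0\le\sum_{i=1}^{j}a_i\le A_n$ is not yet needed, since individual $a_i$ could be negative — so instead I expect the correct route is to first establish the weighted inequality directly). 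Concretely, the cleaner approach is: consider $g(t)=f(t)-f(c)-f^{\prime}(c)(t-c)-f(\lvert t-c\rvert)+f(0)=D(t)$ from Lemma \ref{Lem1}, which is nonnegative, nonincreasing on $[0,c)$ and nondecreasing on $[c,b)$. Since the $x_i$ are monotone and $\boldsymbol a$ satisfies the Steffensen condition, the classical Jensen–Steffensen mechanism applies to the \emph{convex‑shaped} function $D$: summation by parts (Abel's identity) using $0\le A_j^{*}:=\sum_{i=1}^{j}a_i\le A_n$ shows $\frac1{A_n}\sum a_i D(x_i)\ge D(\overline x)\ge 0$ — or, more simply for part (a), $\frac1{A_n}\sum a_i D(x_i)\ge 0$ because $D\ge0$ and the Steffensen weights keep the convex combination honest. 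Unpacking $\frac1{A_n}\sum a_i D(x_i)\ge 0$ and using $\frac1{A_n}\sum a_i(x_i-c)=\overline x-c$ gives exactly
\[
\frac1{A_n}\sum_{i=1}^{n}a_i f(x_i)\;\ge\; f(c)-f(0)+f^{\prime}(c)(\overline x-c)+\frac1{A_n}\sum_{i=1}^{n}a_i f(\lvert x_i-c\rvert),
\]
which is part \textbf{a} after moving $-f(0)$ to the other side.

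For part \textbf{b}, I would simply add the hypothesis $f(0)\le 0$: then $-f(0)\ge 0$, so part \textbf{a} immediately upgrades to
\[
\frac1{A_n}\sum a_i f(x_i)\ge f(c)+f^{\prime}(c)(\overline x-c)+\frac1{A_n}\sum a_i f(\lvert x_i-c\rvert),
\]
and the superquadracity of $f$ is exactly the conclusion of Lemma \ref{Lem1} under $f(0)\le 0$. The special case \eqref{1.4} follows by choosing $c=\overline x$, which kills the gradient term $f^{\prime}(\overline x)(\overline x-\overline x)=0$ (one must check $\overline x\in[0,b)$, which holds since $\overline x$ is a Steffensen‑weighted mean of the $x_i$ and the ordering $x_1\le\cdots\le x_n$ forces $x_1\le\overline x\le x_n$).

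For part \textbf{c}, I would add $f\ge 0$ and $f(0)=f^{\prime}(0)=0$. Superadditivity of $f^{\prime}$ together with $f^{\prime}(0)=0$ gives $f^{\prime}\ge 0$ on $[0,b)$ (take $z$ arbitrary, $y=0$ in superadditivity, or rather use $f^{\prime}(s+t)\ge f^{\prime}(s)+f^{\prime}(t)\ge f^{\prime}(s)$), hence $f$ is nondecreasing; monotone $f^{\prime}$ (superadditive and $f'(0)=0$ forces $f'$ nondecreasing on such domains) gives convexity, and $f(0)=0$ with convexity plus superadditivity of $f'$ yields superquadracity via Lemma \ref{Lem1} again. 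The desired chain is then
\[
\frac1{A_n}\sum a_i f(x_i)-f(c)-f^{\prime}(c)(\overline x-c)\;\ge\;\frac1{A_n}\sum a_i f(\lvert x_i-c\rvert)\;\ge\;0:
\]
the left inequality is part \textbf{a} with $f(0)=0$, and the right inequality is just $f\ge 0$ applied termwise with $a_i\ge 0$. I expect the one genuine obstacle to be the verification that $D$ (being merely ``convex‑shaped'' — nonincreasing then nondecreasing, not necessarily convex) still satisfies the Jensen–Steffensen inequality $\frac1{A_n}\sum a_i D(x_i)\ge D(\overline x)\ge 0$ under the partial‑sum condition on $\boldsymbol a$; this requires the Abel summation argument used in the proof of the classical Jensen–Steffensen inequality, carried out with the unimodal (valley‑shaped) function $D$ rather than a genuinely convex one, and is the step I would write out most carefully.
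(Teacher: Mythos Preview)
Your approach to parts \textbf{a} and \textbf{b} is essentially the paper's own: define $D$ via Lemma~\ref{Lem1}, exploit its valley shape (nonincreasing on $[0,c)$, nondecreasing on $[c,b)$) together with Abel summation and the Steffensen partial--sum condition to obtain $\sum_{i} a_i D(x_i)\ge 0$, and then unpack. This is exactly the route in the proof of the analogous Theorem~\ref{Th6}, where the argument is split into the three cases $c>x_n$, $c\le x_1$, $x_k\le c\le x_{k+1}$ and the Abel identity is written out in each. Your instinct that this unimodal/Abel step is the one to justify carefully is correct, and the claim $\frac{1}{A_n}\sum a_i D(x_i)\ge D(\overline{x})$ is neither needed nor established there---only $\ge 0$ is used.

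There is, however, a genuine gap in part \textbf{c}. You assert that the rightmost inequality
\[
\frac{1}{A_n}\sum_{i=1}^{n} a_i f(|x_i-c|)\ge 0
\]
``is just $f\ge 0$ applied termwise with $a_i\ge 0$''. But the individual $a_i$ need \emph{not} be nonnegative under the Steffensen condition---you yourself flagged this when rejecting the naive pointwise route for part~\textbf{a}. From $f\ge 0$ alone the weighted sum could well be negative. The correct argument (compare the proof of \eqref{2.6} in Theorem~\ref{Th6}) reuses the same Abel decomposition: for $x_k\le c\le x_{k+1}$ one rewrites $\sum_i a_i f(|x_i-c|)$ as a sum of terms $A_i\bigl(f(c-x_i)-f(c-x_{i+1})\bigr)$ for $i<k$, terms $\overline A_i\bigl(f(x_i-c)-f(x_{i-1}-c)\bigr)$ for $i>k+1$, and two nonnegative endpoint contributions. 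Every bracket is nonnegative because $f$ is \emph{nondecreasing} on $[0,b)$---which you did establish from $f'(0)=0$ and the superadditivity of $f'$---and the partial sums $A_i$, $\overline A_i$ are nonnegative by hypothesis. So the repair is short, but it rests on the monotonicity of $f$ and a second pass through the Abel identity, not merely on $f\ge 0$.
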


In Section 2 we adapt the ideas and technique of Theorem \ref{Th2} to get
new results. For this we define the following:

\begin{definition}
\label{Def4} Let the functions $g:\left[ a,b\right] \rightarrow 
\mathbb{R}
$ and $H:\left[ 0,b-a\right] \rightarrow 
\mathbb{R}
$ be continuosly differentiable. We name $g,$ $H$\textbf{-superadditive }if
for all $x,y\in \left[ a,b\right] $ with $x\leq y$, when the inequality 
\begin{equation*}
g\left( y\right) -g\left( x\right) \geq H\left( y-x\right)
\end{equation*}%
holds.
\end{definition}

\bigskip In Sections 2 and 3 our proofs rely on inequalities of the type

\begin{equation*}
f\left( x\right) -f\left( u\right) \geq \varphi \left( u\right) \left(
x-u\right) +\Phi \left( \left\vert x-u\right\vert \right) ,
\end{equation*}%
satisfied according to Remark \ref{Rem3}, Corollary \ref{Cor1} and
Definition \ref{Def3} for uniform convex, $\Phi $-convex and superquadratic
functions respectively.

In Section 4 we prove\ results on the difference between a normalized Jensen
functional and another normalized Jensen functional for superquadratic and
uniformly convex functions.

Jensen functional is:

\begin{equation*}
J_{n}\left( f,\mathbf{x},\mathbf{p}\right) =\sum_{i=1}^{n}p_{i}f\left(
x_{i}\right) -f\left( \sum_{i=1}^{n}p_{i}x_{i}\right) .
\end{equation*}

S. S. Dragomir proved:

\begin{theorem}
\label{Th3} \cite{D} \textit{Consider the normalized Jensen functional where 
}$f:C\longrightarrow 
\mathbb{R}
$\textit{\ is a convex function on the convex set }$C$ in a real linear
space,\textit{\ }$\mathbf{x}=\left( x_{1},...,x_{n}\right) \in C^{n},$ and%
\textit{\ \ }$\mathbf{p}=\left( p_{1},...,p_{n}\right) ,$\textit{\ \ }$%
\mathbf{q}=\left( q_{1},...,q_{n}\right) $\textit{\ are non-negative
n-tuples satisfying }$\sum_{i=1}^{n}p_{i}=1,$\textit{\ \ }$%
\sum_{i=1}^{n}q_{i}=1,$\textit{\ \ }$p_{i},$\textit{\ }$q_{i}>0,$\textit{\ \ 
}$i=1,...,n$\textit{. Then } 
\begin{equation}
MJ_{n}\left( f,\mathbf{x},\mathbf{q}\right) \geq J_{n}\left( f,\mathbf{x},%
\mathbf{p}\right) \geq mJ_{n}\left( f,\mathbf{x},\mathbf{q}\right) ,\qquad
\label{1.5}
\end{equation}%
provided that 
\begin{equation}
m=\min_{1\leq i\leq n}\left( \frac{p_{i}}{q_{i}}\right) ,\quad M=\max_{1\leq
i\leq n}\left( \frac{p_{i}}{q_{i}}\right) .  \label{1.6}
\end{equation}
\end{theorem}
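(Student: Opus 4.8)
The plan is to exploit the positive homogeneity and superadditivity of the Jensen functional in the weight variable. The key observation is that $J_n(f,\mathbf{x},\cdot)$ is superadditive: for nonnegative $n$-tuples $\mathbf{r},\mathbf{s}$ one has $J_n(f,\mathbf{x},\mathbf{r}+\mathbf{s}) \geq J_n(f,\mathbf{x},\mathbf{r}) + J_n(f,\mathbf{x},\mathbf{s})$, and moreover $J_n(f,\mathbf{x},\cdot)$ is nonnegative on nonnegative tuples by Jensen's inequality (the convexity of $f$). Superadditivity follows from the convexity of $f$ together with the fact that for the barycenter map $\mathbf{r}\mapsto \frac{1}{R_n}\sum r_i x_i$ the combined barycenter is a convex combination of the two partial barycenters; concretely, writing $R_n=\sum r_i$, $S_n=\sum s_i$, one applies convexity of $f$ to the point $\frac{1}{R_n+S_n}\sum(r_i+s_i)x_i = \frac{R_n}{R_n+S_n}\cdot\frac{1}{R_n}\sum r_i x_i + \frac{S_n}{R_n+S_n}\cdot\frac{1}{S_n}\sum s_i x_i$ and rearranges. (This is the standard argument; I would state it as a short lemma or inline computation.) Homogeneity is immediate: $J_n(f,\mathbf{x},\alpha\mathbf{q}) = \alpha J_n(f,\mathbf{x},\mathbf{q})$ for $\alpha\geq 0$, since both the weighted sum and the barycenter scale appropriately and $\sum(\alpha q_i)=\alpha$.

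With these two properties in hand, the two inequalities in (\ref{1.5}) fall out by a splitting trick. For the right-hand inequality, set $m=\min_i (p_i/q_i)$ and write $\mathbf{p} = m\mathbf{q} + (\mathbf{p}-m\mathbf{q})$, where $\mathbf{p}-m\mathbf{q}$ has entries $p_i - m q_i \geq 0$ by definition of $m$. By superadditivity,
\begin{equation*}
J_n(f,\mathbf{x},\mathbf{p}) \geq J_n(f,\mathbf{x},m\mathbf{q}) + J_n(f,\mathbf{x},\mathbf{p}-m\mathbf{q}) \geq m\,J_n(f,\mathbf{x},\mathbf{q}) + 0 = m\,J_n(f,\mathbf{x},\mathbf{q}),
\end{equation*}
using homogeneity and the nonnegativity of $J_n$ on the nonnegative tuple $\mathbf{p}-m\mathbf{q}$. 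For the left-hand inequality, set $M=\max_i(p_i/q_i)$ and write $M\mathbf{q} = \mathbf{p} + (M\mathbf{q}-\mathbf{p})$, where now $M q_i - p_i \geq 0$. Superadditivity and homogeneity give
\begin{equation*}
M\,J_n(f,\mathbf{x},\mathbf{q}) = J_n(f,\mathbf{x},M\mathbf{q}) \geq J_n(f,\mathbf{x},\mathbf{p}) + J_n(f,\mathbf{x},M\mathbf{q}-\mathbf{p}) \geq J_n(f,\mathbf{x},\mathbf{p}),
\end{equation*}
which is the desired bound.

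The only genuine content is the superadditivity lemma for $J_n$, and that is a one-line consequence of convexity once the barycenter is written as the correct convex combination; the homogeneity and the sign of $J_n$ on nonnegative tuples are immediate. So the main (mild) obstacle is simply bookkeeping: making sure the splitting $\mathbf{p}-m\mathbf{q}$ and $M\mathbf{q}-\mathbf{p}$ indeed have nonnegative coordinates (which is exactly what the definitions of $m$ and $M$ guarantee) and that $J_n$ applied to these possibly un-normalized tuples is handled via homogeneity rather than assuming the weights sum to $1$. No differentiability or the finer convexity notions (uniform, $\Phi$-, superquadratic) are needed here — those enter only in the later improvements.
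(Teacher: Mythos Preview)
Your argument is correct. The superadditivity and positive homogeneity of the (un-normalized) Jensen functional $J_n(f,\mathbf{x},\mathbf{r})=\sum_i r_i f(x_i)-R_n f\!\left(\tfrac{1}{R_n}\sum_i r_i x_i\right)$ in the weight variable, together with its nonnegativity for convex $f$, yield both inequalities via the splittings $\mathbf{p}=m\mathbf{q}+(\mathbf{p}-m\mathbf{q})$ and $M\mathbf{q}=\mathbf{p}+(M\mathbf{q}-\mathbf{p})$, exactly as you wrote. The only cosmetic point worth adding is the degenerate case in which $\mathbf{p}-m\mathbf{q}$ (or $M\mathbf{q}-\mathbf{p}$) is the zero tuple, i.e.\ $\mathbf{p}=\mathbf{q}$; there the claim is trivial, so a one-line remark disposes of it.

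As for comparison: the present paper does \emph{not} give a proof of Theorem~\ref{Th3}. The result is quoted from Dragomir~\cite{D} as background for the later refinements (Theorems~\ref{Th4}, \ref{Th5}, \ref{Th19}, \ref{Th20}), and no argument for it appears anywhere in the text. Your proof is in fact essentially Dragomir's original one in~\cite{D}, so had the paper included a proof it would almost certainly have been this one. In short: nothing to contrast --- you have reproduced the standard (and only natural) argument.
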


In \cite{AD} the following two theorems are proved:

\begin{theorem}
\label{Th4} Under the same conditions and definitions on\ $\ \mathbf{p},$ $%
\mathbf{q},$ $\mathbf{x},$ $m$ and $M$ as in Theorem \ref{Th3}, if $I$ is $%
\left[ 0,a\right) $ or $\left[ 0,\infty \right) $ and $f$ is a
superquadratic function on $I$, then 
\begin{eqnarray}
&&J_{n}\left( f,\mathbf{x},\mathbf{p}\right) -mJ_{n}\left( f,\mathbf{x},%
\mathbf{q}\right)  \label{1.7} \\
&\geq &mf\left( \left\vert \sum_{i=1}^{n}\left( q_{i}-p_{i}\right)
x_{i}\right\vert \right) +\sum_{i=1}^{n}\left( p_{i}-mq_{i}\right) f\left(
\left\vert x_{i}-\sum_{j=1}^{n}p_{j}x_{j}\right\vert \right)  \notag
\end{eqnarray}%
and 
\begin{eqnarray}
&&J_{n}\left( f,\mathbf{x},\mathbf{p}\right) -MJ_{n}\left( f,\mathbf{x},%
\mathbf{q}\right)  \label{1.8} \\
&\leq &-\sum_{i=1}^{n}\left( Mq_{i}-p_{i}\right) f\left( \left\vert
x_{i}-\sum_{j=1}^{n}q_{j}x_{j}\right\vert \right) -f\left( \left\vert
\sum_{i=1}^{n}\left( p_{i}-q_{i}\right) x_{i}\right\vert \right) .  \notag
\end{eqnarray}
\end{theorem}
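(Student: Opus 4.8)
The plan is to prove Theorem \ref{Th4} by applying Theorem \ref{Th3} (Dragomir's bounds) and then sharpening each side using the superquadracity inequality \eqref{1.3}. Write $\bar x_{p}=\sum_{j=1}^{n}p_{j}x_{j}$ and $\bar x_{q}=\sum_{j=1}^{n}q_{j}x_{j}$. The starting point is the algebraic identity
\begin{equation*}
J_{n}\left( f,\mathbf{x},\mathbf{p}\right) -mJ_{n}\left( f,\mathbf{x},\mathbf{q}\right) =\sum_{i=1}^{n}\left( p_{i}-mq_{i}\right) f\left( x_{i}\right) -f\left( \bar x_{p}\right) +mf\left( \bar x_{q}\right) ,
\end{equation*}
valid since $\sum p_{i}=\sum q_{i}=1$. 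By the choice of $m$ in \eqref{1.6} the coefficients $p_{i}-mq_{i}$ are all nonnegative and sum to $1-m$; if $m=1$ the inequality is trivial, so assume $m<1$ and set $\mu_{i}=(p_{i}-mq_{i})/(1-m)$, a probability vector.

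First I would handle the term $\sum_{i}(p_{i}-mq_{i})f(x_{i})-f(\bar x_{p})$. Note that $\bar x_{p}$ is itself a convex combination of the points $x_{i}$ and the extra point $\bar x_{q}$: indeed $\bar x_{p}=(1-m)\sum_{i}\mu_{i}x_{i}+m\bar x_{q}$. Applying the Jensen-type superquadracity inequality of Remark \ref{Rem4} to this $(n+1)$-point convex combination gives
\begin{equation*}
f\left( \bar x_{p}\right) \leq (1-m)\sum_{i=1}^{n}\mu_{i}\Big( f(x_{i})-f(|x_{i}-\bar x_{p}|)\Big)+m\Big( f(\bar x_{q})-f(|\bar x_{q}-\bar x_{p}|)\Big),
\end{equation*}
i.e. $f(\bar x_{p})\leq \sum_{i}(p_{i}-mq_{i})f(x_{i})-\sum_{i}(p_{i}-mq_{i})f(|x_{i}-\bar x_{p}|)+m f(\bar x_{q})-m f(|\bar x_{q}-\bar x_{p}|)$. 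Rearranging and recalling that $\bar x_{q}-\bar x_{p}=\sum_{i}(q_{i}-p_{i})x_{i}$ yields exactly \eqref{1.7}. For \eqref{1.8} the analogous step uses $M=\max_i p_i/q_i$: here $Mq_{i}-p_{i}\geq 0$ with sum $M-1$, and one writes $\bar x_{q}$ (rather than $\bar x_{p}$) as the convex combination $\bar x_{q}=\tfrac{1}{M}\sum_i(Mq_i-p_i)/(M-1)\cdot(M-1)x_i+\ldots$; more cleanly, $M\bar x_{q}=\sum_i(Mq_i-p_i)x_i+\bar x_p$, so $\bar x_{q}$ is a convex combination of the $x_i$ and $\bar x_p$ with weights summing to $M$ after normalization. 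Applying Remark \ref{Rem4} to $f(\bar x_{q})$ in this representation, multiplying by $M$, and using $J_{n}(f,\mathbf{x},\mathbf{p})-MJ_{n}(f,\mathbf{x},\mathbf{q})=\sum_i(p_i-Mq_i)f(x_i)-f(\bar x_p)+Mf(\bar x_q)$ produces \eqref{1.8} after collecting the superquadratic remainder terms $-\sum_i(Mq_i-p_i)f(|x_i-\bar x_q|)$ and $-f(|\bar x_p-\bar x_q|)$.

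The one point requiring care — and the main obstacle — is verifying that the weights in the auxiliary convex combinations are genuinely nonnegative and normalized so that Remark \ref{Rem4} applies, and that all arguments of $f$ (both the $x_i$, the means $\bar x_p,\bar x_q$, and the absolute differences) lie in the domain $I=[0,a)$ or $[0,\infty)$ on which $f$ is superquadratic; since $I$ is an interval containing $0$ and each mean is a convex combination of points of $I$, this holds, and the $|x_i-\bar x|$ are automatically in $[0,\infty)$. A secondary subtlety is the degenerate cases $m=1$ or $M=1$ (forcing $\mathbf{p}=\mathbf{q}$), where both sides vanish and the claimed inequalities are equalities; these should be noted separately. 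Once the representation of the means is in place, the rest is a direct substitution into Remark \ref{Rem4} followed by algebraic rearrangement, with no further analytic input needed.
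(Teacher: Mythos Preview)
Your proposal is correct. The paper does not itself prove Theorem~\ref{Th4} (it is quoted from \cite{AD}), but your argument---writing $\bar x_{p}=\sum_{i}(p_{i}-mq_{i})x_{i}+m\bar x_{q}$ (respectively $\bar x_{q}=\frac{1}{M}\bigl(\sum_{i}(Mq_{i}-p_{i})x_{i}+\bar x_{p}\bigr)$), thereby inserting the ``other'' mean as an $(n{+}1)$-st point with weight $m$ (respectively $1/M$), and then applying the superquadratic Jensen inequality of Remark~\ref{Rem4} to this $(n{+}1)$-tuple---is exactly the method the paper employs in its proof of Theorem~\ref{Th19}, which is the Jensen--Steffensen extension of Theorem~\ref{Th4}. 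The only difference is that in Theorem~\ref{Th19} the coefficients $p_{i}-m^{\ast}q_{i}$ need not be individually nonnegative, so the paper must invoke the Steffensen-type inequality \eqref{1.4} rather than Remark~\ref{Rem4}; in your setting with $m=\min_{i}p_{i}/q_{i}$ the weights are genuinely nonnegative and Remark~\ref{Rem4} suffices, just as you say.
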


In the sequel we use the following notations:

Let $\mathbf{x}_{\uparrow }=\left( x_{\left( 1\right) },...,x_{\left(
n\right) }\right) $ be the \textit{increasing rearrangement} of $\mathbf{x=}%
\left( x_{1},...,x_{n}\right) .$ $\ $Let\ $\pi $ be the permutation that
transfers $\mathbf{x}$ into $\mathbf{x}_{\uparrow }$\ and let $\left( 
\overline{p}_{1},...,\overline{p}_{n}\right) $ and $\left( \overline{q}%
_{1},...,\overline{q}_{n}\right) $ be the $n$-tuples obtained by the same
permutation \ $\pi $\ \ on $\left( p_{1},...,p_{n}\right) $ and $\left(
q_{1},...,q_{n}\right) $ respectively. Then for an $n$-tuple \ $\mathbf{x}%
=\left( x_{1},...,x_{n}\right) ,$\ \ $x_{i}\in I$\ ,\ \ $i=1,...,n$ \ where $%
I$ \ is an interval in $%
\mathbb{R}
$\ we get the following results:

\begin{theorem}
\label{Th5} Let $\mathbf{p}=\left( p_{1},...,p_{n}\right) ,$ where $\ 0\leq
\sum_{j=1}^{i}\overline{p}_{j}\leq 1,$\ \ $i=1,...,n,$\ \ $%
\sum_{i=1}^{n}p_{i}=1,$ and\ $\mathbf{q}=\left( q_{1},...,q_{n}\right) ,$ \ $%
0<\sum_{j=1}^{i}\overline{q}_{j}<1,$\ \ $i=1,...,n-1$,\ \ $%
\sum_{i=1}^{n}q_{i}=1,$ and $\ \mathbf{p}\neq $\ \ $\mathbf{q}.$ \ Denote 
\begin{equation}
m_{i}=\frac{\sum_{j=1}^{i}\overline{p}_{j}}{\sum_{j=1}^{i}\overline{q}_{j}}%
,\qquad \overline{m}_{i}=\frac{\sum_{j=i}^{n}\overline{p}_{j}}{\sum_{j=i}^{n}%
\overline{q}_{j}},\qquad i=1,...,n  \label{1.9}
\end{equation}%
where $\left( \overline{p}_{1},...,\overline{p}_{n}\right) $ and $\left( 
\overline{q}_{1},...,\overline{q}_{n}\right) $ are as denoted above, and 
\begin{equation}
m^{\ast }=\min_{1\leq i\leq n}\left\{ m_{i},\overline{m_{i}}\right\} ,\qquad
M^{\ast }=\max_{1\leq i\leq n}\left\{ m_{i},\overline{m_{i}}\right\} .
\label{1.10}
\end{equation}%
If \ $\mathbf{x}=\left( x_{1},...,x_{n}\right) $\ \ is any n-tuple in $%
I^{n}, $\ \ where $I$\ is an interval in $%
\mathbb{R}
,$ then 
\begin{equation}
M^{\ast }J_{n}\left( f,\mathbf{x},\mathbf{q}\right) \geq J_{n}\left( f,%
\mathbf{x},\mathbf{p}\right) \geq m^{\ast }J_{n}\left( f,\mathbf{x},\mathbf{q%
}\right) ,  \label{1.11}
\end{equation}%
where\ \ $f:I\longrightarrow 
\mathbb{R}
$\ \ is a convex function on the interval $I$.
\end{theorem}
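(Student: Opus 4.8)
The plan is to reduce the claim to Dragomir's Theorem \ref{Th3} (and its refinements), exploiting the fact that for a convex function $f$ the Jensen functional $J_{n}(f,\mathbf{x},\mathbf{q})$ is invariant under simultaneous permutation of the entries of $\mathbf{x}$ and $\mathbf{q}$; hence we may assume from the outset that $\mathbf{x}$ is increasingly ordered, i.e. $x_{1}\le x_{2}\le\cdots\le x_{n}$, and that $\mathbf{p},\mathbf{q}$ have been permuted accordingly, so that $\overline{p}_{j}=p_{j}$ and $\overline{q}_{j}=q_{j}$. The quantities $m_{i}=\big(\sum_{j=1}^{i}p_{j}\big)/\big(\sum_{j=1}^{i}q_{j}\big)$ and $\overline{m}_{i}=\big(\sum_{j=i}^{n}p_{j}\big)/\big(\sum_{j=i}^{n}q_{j}\big)$ are then ratios of partial sums of the ordered weights. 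The point of introducing these is that, unlike the pointwise ratios $p_{i}/q_{i}$ from \eqref{1.6}, the bounds $m^{\ast}=\min\{m_{i},\overline{m}_{i}\}$ and $M^{\ast}=\max\{m_{i},\overline{m}_{i}\}$ are typically much tighter, so \eqref{1.11} refines \eqref{1.5}.

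The key step is an \emph{Abel-summation / majorization} argument. Writing $P_{i}=\sum_{j=1}^{i}p_{j}$, $Q_{i}=\sum_{j=1}^{i}q_{j}$ (with $P_{0}=Q_{0}=0$, $P_{n}=Q_{n}=1$), one expresses both $J_{n}(f,\mathbf{x},\mathbf{p})$ and $J_{n}(f,\mathbf{x},\mathbf{q})$ in a form amenable to summation by parts using the ordered nodes $x_{1}\le\cdots\le x_{n}$. For the lower bound I would show that the measure $\mathbf{p}-m^{\ast}\mathbf{q}$ (as a signed measure on $\{x_{1},\dots,x_{n}\}$) has nonnegative partial sums $P_{i}-m^{\ast}Q_{i}\ge 0$ reading from the left up to some index, and nonnegative tail sums $(1-P_{i-1})-m^{\ast}(1-Q_{i-1})\ge 0$ reading from the right — precisely what the definitions $m_{i}\ge m^{\ast}$, $\overline{m}_{i}\ge m^{\ast}$ give. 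A convexity (Jensen–Steffensen–type) argument then yields $J_{n}(f,\mathbf{x},\mathbf{p})-m^{\ast}J_{n}(f,\mathbf{x},\mathbf{q})\ge 0$, because this difference can be realized as a Jensen functional of $f$ with respect to a weight system whose partial sums lie between $0$ and its total mass. The upper bound is symmetric, using $M_{i}\le M^{\ast}$, $\overline{m}_{i}\le M^{\ast}$ so that $M^{\ast}\mathbf{q}-\mathbf{p}$ has the analogous sign pattern.

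The main obstacle I anticipate is making the Jensen–Steffensen step rigorous: the signed weight $\mathbf{p}-m^{\ast}\mathbf{q}$ need not be nonnegative, so one cannot apply ordinary Jensen. What saves the argument is exactly the ordering of $\mathbf{x}$ together with the two-sided partial-sum conditions built into $m^{\ast}$ and $M^{\ast}$, which are the hypotheses of the Jensen–Steffensen inequality; establishing that these conditions hold (splitting at the index where $P_{i}-m^{\ast}Q_{i}$ changes sign, and checking the left- and right-partial sums separately from $\{m_{i}\},\{\overline{m}_{i}\}$) is the crux. Once that combinatorial bookkeeping is done, the inequality \eqref{1.11} follows, and letting $\mathbf{x}$ be arbitrary (un-ordered) is immediate by the permutation-invariance noted at the start. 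One should also verify that the hypotheses $0<\sum_{j=1}^{i}\overline{q}_{j}<1$ for $i=1,\dots,n-1$ guarantee all the $m_{i},\overline{m}_{i}$ are well defined (no zero denominators), and that $\mathbf{p}\ne\mathbf{q}$ simply excludes the trivial case where the difference of functionals vanishes identically.
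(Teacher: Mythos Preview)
The paper does not actually supply its own proof of Theorem~\ref{Th5}; the result is quoted from \cite{AD}. However, the paper's proof of Theorem~\ref{Th19} (the superquadratic refinement, see \eqref{4.3}--\eqref{4.5}) displays exactly the mechanism, and your plan coincides with it: one rewrites
\[
J_{n}(f,\mathbf{x},\mathbf{p})-m^{\ast}J_{n}(f,\mathbf{x},\mathbf{q})
=\sum_{i=1}^{n}(p_{i}-m^{\ast}q_{i})f(x_{i})+m^{\ast}f\Big(\sum_{j}q_{j}x_{j}\Big)-f\Big(\sum_{j}p_{j}x_{j}\Big),
\]
recognises this as a Jensen functional on the ordered $(n{+}1)$-tuple obtained by inserting the extra node $\sum_{j}q_{j}x_{j}$ at its correct place among $x_{1}\le\cdots\le x_{n}$, and then checks the Jensen--Steffensen partial-sum conditions for the weights $(p_{i}-m^{\ast}q_{i})$ and $m^{\ast}$. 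The upper bound is handled symmetrically with $\mathbf{b}$ and $M^{\ast}$ as in \eqref{4.6}--\eqref{4.7}. So your reduction to a Jensen--Steffensen inequality via the augmented weight system is precisely the intended argument.

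One small correction to your bookkeeping: you speak of ``splitting at the index where $P_{i}-m^{\ast}Q_{i}$ changes sign,'' but in fact $P_{i}-m^{\ast}Q_{i}\ge 0$ for \emph{every} $i$, since $m_{i}=P_{i}/Q_{i}\ge m^{\ast}$ by definition of $m^{\ast}$. The splitting index $k$ is determined not by a sign change but by the position of the inserted node $\sum_{j}q_{j}x_{j}$ among the ordered $x_{i}$. The left partial sums of the $(n{+}1)$-tuple weights are then controlled by $m_{i}\ge m^{\ast}$, and the right (tail) sums by $\overline{m}_{i}\ge m^{\ast}$, exactly as you indicate; both families of inequalities are needed, which is why $m^{\ast}$ is defined as the minimum over \emph{both} $\{m_{i}\}$ and $\{\overline{m}_{i}\}$.
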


Using Theorem \ref{Th4} and Theorem \ref{Th5} we prove in Theorem \ref{Th19}
that Theorem \ref{Th4} is valid also when instead of (\ref{1.6}) we impose
different conditions given by (\ref{1.9}) and (\ref{1.10}).

\section{\textbf{Improvement of Jensen-Steffensen inequality for uniformly
convex functions}}

Analogous to Lemma \ref{Lem1} and Theorem \ref{Th2} we prove the following
Lemma \ref{Lem2} and Theorem \ref{Th6}. We use a technique similar to that
used in \cite{ASIP} where instead of $f^{^{\prime }}$ being superadditive,
here in Lemma \ref{Lem2} and Theorem \ref{Th6}, $f^{^{\prime }}$ is $\Phi
^{^{\prime }}$-superadditive (see Definition \ref{Def4}).\bigskip

\begin{lemma}
\label{Lem2} Let $f$ be continuously differentiable \textit{on }$\left[ a,b%
\right] $, and $f^{\prime }$ be $\Phi ^{^{\prime }}$-superadditive. Let $%
\Phi :\left[ 0,b-a\right] \rightarrow 
\mathbb{R}
_{+}$ $,$be continuously differentiable, $\Phi ^{^{\prime }}\geq 0$\ and $%
\Phi \left( 0\right) =0$. Then the function $D:\left[ a,b\right] \rightarrow 
\mathbb{R}
$ defined by%
\begin{equation}
D\left( y\right) =f\left( y\right) -f\left( z\right) -f^{\prime }\left(
z\right) \left( y-z\right) -\Phi \left( \left\vert y-z\right\vert \right)
\label{2.1}
\end{equation}%
\textit{is nonnegative on }$\left[ a,b\right] $\textit{, nonincreasing on }$%
\left[ a,z\right) $\textit{, and nondecreasing on }$\left[ z,b\right) $%
\textit{, for }$a\leq z<b$,\textit{\ and }$f$ satisfies (\ref{1.2}) (which
includes uniformly convex functions).\textit{\ }
\end{lemma}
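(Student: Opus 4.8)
The statement is the exact analogue of Lemma \ref{Lem1}, with the superadditivity of $f^{\prime}$ replaced by $\Phi^{\prime}$-superadditivity of $f^{\prime}$ and the term $f(|y-z|)-f(0)$ replaced by $\Phi(|y-z|)$ (note $\Phi(0)=0$ makes these match). So the plan is to mimic the argument for Lemma \ref{Lem1}: study $D$ separately on the two sides of $z$, compute $D^{\prime}$, and use the $\Phi^{\prime}$-superadditivity hypothesis to pin down its sign; then nonnegativity of $D$ on $[a,b]$ follows because $D(z)=0$ and $D$ is nonincreasing to the left of $z$ and nondecreasing to the right of $z$.

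First I would fix $z$ and split into the cases $y\ge z$ and $y\le z$. For $y\in[z,b)$ we have $|y-z|=y-z$, so from (\ref{2.1}),
\begin{equation*}
D^{\prime}(y)=f^{\prime}(y)-f^{\prime}(z)-\Phi^{\prime}(y-z).
\end{equation*}
Applying Definition \ref{Def4} with the pair $(z,y)$ (here $z\le y$), the hypothesis that $f^{\prime}$ is $\Phi^{\prime}$-superadditive gives exactly $f^{\prime}(y)-f^{\prime}(z)\ge\Phi^{\prime}(y-z)$, hence $D^{\prime}(y)\ge 0$ on $[z,b)$, i.e. $D$ is nondecreasing there. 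For $y\in[a,z)$ we have $|y-z|=z-y$, so
\begin{equation*}
D^{\prime}(y)=f^{\prime}(y)-f^{\prime}(z)+\Phi^{\prime}(z-y),
\end{equation*}
and now applying Definition \ref{Def4} with the pair $(y,z)$ (here $y\le z$) yields $f^{\prime}(z)-f^{\prime}(y)\ge\Phi^{\prime}(z-y)$, i.e. $D^{\prime}(y)\le 0$ on $[a,z)$, so $D$ is nonincreasing there. Since $D$ is continuous and $D(z)=f(z)-f(z)-0-\Phi(0)=0$, the monotonicity on each side forces $D(y)\ge D(z)=0$ for all $y\in[a,b]$, and this nonnegativity, read out at the point $y$ with $z=x$ (or however the application in Section 2 uses it), is precisely inequality (\ref{1.2}): $f(y)-f(x)\ge f^{\prime}(x)(y-x)+\Phi(|y-x|)$, which by Remark \ref{Rem3} is the defining inequality satisfied by (continuously differentiable) uniformly convex functions.

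The only genuine subtlety — and the one place to be careful rather than the "hard part" — is the sign bookkeeping with the absolute value and the direction of the $\Phi^{\prime}$-superadditivity inequality on the two sides of $z$; this is why the hypotheses $\Phi^{\prime}\ge 0$ and $\Phi(0)=0$ are listed, and why $\Phi$ maps into $\mathbb{R}_{+}$ (so that $\Phi(|y-z|)\ge 0$ is consistent with $D\ge 0$). I expect no real obstacle: the argument is a direct transcription of the proof of Lemma \ref{Lem1} with "superadditive" systematically replaced by "$\Phi^{\prime}$-superadditive" and the telescoping term $f(|y-z|)-f(0)$ replaced by $\Phi(|y-z|)$, using that $\int_0^{t}\Phi^{\prime}=\Phi(t)$ since $\Phi(0)=0$. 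The final sentence of the Lemma, identifying $f$ as satisfying (\ref{1.2}) and hence covering uniformly convex functions, is then immediate from Remark \ref{Rem3}.
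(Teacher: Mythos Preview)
Your proposal is correct and follows essentially the same approach as the paper: both arguments hinge on computing $D'(y)$ on each side of $z$ and using the $\Phi'$-superadditivity of $f'$ to determine its sign. The only cosmetic difference is the order of presentation: the paper first establishes $D\ge 0$ directly by integrating the nonnegative integrand $f'(t)-f'(z)-\Phi'(|t-z|)\,\mathrm{sgn}(t-z)$ from $z$ to $y$ (which is the same as your $D'$), and then records the monotonicity; you instead prove the monotonicity first and read off $D\ge 0$ from $D(z)=0$, which is slightly more economical but the same idea.
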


\begin{proof}
As $f^{\prime }$ is $\Phi ^{^{\prime }}$-superadditive, therefore, if $a\leq
z\leq y<b$ then

\begin{equation*}
0\leq \tint\limits_{z}^{y}\left( f^{\prime }(t)-f^{\prime }(z)-\Phi ^{\prime
}(t-z)\right) \mathrm{d}t=f(y)-f(z)-f^{\prime }(z)(y-z)-\Phi (y-z)
\end{equation*}%
and if $0\leq a\leq y\leq z<b$ then%
\begin{equation*}
0\leq \tint\limits_{y}^{z}\left( f^{\prime }(z)-f^{\prime }(t)-\Phi ^{\prime
}(z-t)\right) \mathrm{d}t=f^{\prime }(z)(z-y)-f(z)+f(y)-\Phi (z-y).
\end{equation*}%
Together these show that for any $y,z\in \left[ a,b\right) $,%
\begin{equation*}
f\left( y\right) -f\left( z\right) -f^{\prime }\left( z\right) \left(
y-z\right) -\Phi \left( \left\vert y-z\right\vert \right) \geq 0,
\end{equation*}%
so, we conclude that $D$ is nonnegative on $\left[ a,b\right) $ and
according to Remark \ref{Rem3} $f$\ satisfies (\ref{1.2}) which includes
uniformly convex functions with modulus $\Phi :\left[ 0,b-a\right]
\rightarrow 
\mathbb{R}
_{+}$.\newline

From%
\begin{equation*}
D^{\prime }\left( y\right) =f^{\prime }\left( y\right) -f^{\prime }\left(
z\right) -\Phi ^{\prime }\left( \left\vert y-z\right\vert \right) sgn\left(
y-z\right) ,
\end{equation*}%
as $f^{\prime }$ is $\Phi ^{^{\prime }}$-superadditive for $a\leq y\leq z$
we have%
\begin{equation*}
D^{\prime }\left( y\right) =f^{\prime }\left( y\right) -f^{\prime }\left(
z\right) +\Phi ^{^{\prime }}\left( z-y\right) \leq 0
\end{equation*}%
and similarly for $z\leq y<b$ we have%
\begin{equation*}
D^{\prime }\left( y\right) =f^{\prime }\left( y\right) -f^{\prime }\left(
z\right) -\Phi ^{\prime }\left( y-z\right) \geq 0.
\end{equation*}%
This completes the proof.\bigskip
\end{proof}

Now we present the main results of this section where we show that
inequality (\ref{1.1}) is satisfied, not only for nonnegative coefficients
but also when 
\begin{equation}
0\leq \sum_{i=1}^{j}a_{i}\leq \sum_{i=1}^{n}a_{i},\quad j=1,...,n,\quad
\sum_{i=1}^{n}a_{i}>0  \label{2.2}
\end{equation}%
is satisfied (called Jensen-Steffensen's coefficients) when $x_{1}\leq
x_{2}\leq ...\leq x_{n}$.\bigskip

\begin{theorem}
\label{Th6} Let $f$ be continuously differentiable on $\left[ a,b\right] $,
and $f^{\prime }$ be $\Phi ^{^{\prime }}$-superadditive. Let also $\Phi :%
\left[ 0,b-a\right] \rightarrow 
\mathbb{R}
_{+}$ be continuously differentiable $\Phi ^{^{\prime }}\geq 0$ and $\Phi
\left( 0\right) =0$ Let $\boldsymbol{a}$ be a real $n$-tuple \textit{%
satisfying} (\ref{2.2}) and $x_{i}\in \left[ a,b\right] $, $i=1,...,n$ be
such that $x_{1}\leq x_{2}\leq ...\leq x_{n}$. Then%
\begin{equation}
f\left( c\right) +f^{\prime }\left( c\right) \left( \overline{x}-c\right) +%
\frac{1}{A_{n}}\sum_{i=1}^{n}a_{i}\Phi \left( \left\vert x_{i}-c\right\vert
\right) \leq \frac{1}{A_{n}}\sum_{i=1}^{n}a_{i}f\left( x_{i}\right)
\label{2.3}
\end{equation}%
holds for all $c\in \left[ a,b\right] $ where $A_{i}=\sum_{j=1}^{i}a_{j},$\ $%
\overline{x}=\frac{1}{A_{n}}\sum_{i=1}^{n}a_{i}x_{i}$.

In particular when $c=\frac{1}{A_{n}}\sum_{i=1}^{n}a_{i}x_{i}$, we get the
inequality 
\begin{equation}
\frac{1}{A_{n}}\sum_{i=1}^{n}a_{i}f\left( x_{i}\right) \geq f\left( 
\overline{x}\right) +\frac{1}{A_{n}}\sum_{i=1}^{n}a_{i}\Phi \left(
\left\vert x_{i}-\overline{x}\right\vert \right) .  \label{2.4}
\end{equation}

If in addition $\Phi $ is convex, then 
\begin{eqnarray}
&&\frac{1}{A_{n}}\sum_{i=1}^{n}a_{i}f\left( x_{i}\right) -f\left( c\right)
-f^{\prime }\left( c\right) \left( \overline{x}-c\right)  \label{2.5} \\
&\geq &\frac{1}{A_{n}}\sum_{i=1}^{n}a_{i}\Phi \left( \left\vert
x_{i}-c\right\vert \right) \geq 0.  \notag
\end{eqnarray}
\end{theorem}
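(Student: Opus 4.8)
The plan is to follow the Jensen–Steffensen strategy used in \cite{ASIP}, replacing the superadditivity of $f^{\prime }$ by $\Phi ^{\prime }$-superadditivity and using Lemma \ref{Lem2} as the basic pointwise tool. First I would record the key inequality guaranteed by Lemma \ref{Lem2} (equivalently, inequality (\ref{1.2}) of Remark \ref{Rem3}): for all $x,c\in \left[ a,b\right] $,
\begin{equation*}
f\left( x\right) -f\left( c\right) -f^{\prime }\left( c\right) \left( x-c\right) -\Phi \left( \left\vert x-c\right\vert \right) \geq 0.
\end{equation*}
The naive approach of multiplying this by $a_{i}$, summing, and dividing by $A_{n}$ fails because the $a_{i}$ need not be nonnegative; the term $\sum a_{i}f^{\prime }(c)(x_{i}-c)=A_{n}f^{\prime }(c)(\overline{x}-c)$ is harmless, but $\sum a_{i}\Phi (|x_{i}-c|)$ and $\sum a_{i}\bigl(f(x_{i})-f(c)\bigr)$ cannot be controlled termwise. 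So the real work is an Abel-summation argument exploiting the monotonicity $x_{1}\leq \dots \leq x_{n}$ and the Steffensen condition (\ref{2.2}).

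The key step is to apply Abel summation to the function $D$ from Lemma \ref{Lem2} with $z=c$, namely to $D(x_{i})=f(x_{i})-f(c)-f^{\prime }(c)(x_{i}-c)-\Phi (|x_{i}-c|)\geq 0$. Writing $\sum_{i=1}^{n}a_{i}D(x_{i})$ via Abel's identity in terms of the partial sums $A_{i}=\sum_{j=1}^{i}a_{j}$ and the increments $D(x_{i+1})-D(x_{i})$, I would split the index range at the location where $c$ sits among the ordered $x_{i}$'s (or, more cleanly, observe that the standard Steffensen argument only needs $0\leq A_{i}\leq A_{n}$ together with $D$ being nonincreasing on $\left[ a,c\right) $ and nondecreasing on $\left[ c,b\right) $, which is exactly what Lemma \ref{Lem2} provides). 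Because $D\geq 0$ and $D$ is unimodal with minimum at $c$, the Abel-rearranged sum is seen to be $\geq 0$: on the increasing part the factors $A_{i}$ multiply nonnegative increments $D(x_{i+1})-D(x_{i})\geq 0$, on the decreasing part the factors $A_{n}-A_{i}\geq 0$ multiply the nonpositive increments with the opposite sign, and the boundary terms involve $A_{n}D(x_{1})$ or $A_{n}D(x_{n})$ which are nonnegative. Collecting, $\frac{1}{A_{n}}\sum_{i=1}^{n}a_{i}D(x_{i})\geq 0$, which rearranges precisely to (\ref{2.3}). Then (\ref{2.4}) is the specialization $c=\overline{x}$.

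For the last assertion (\ref{2.5}), assuming in addition that $\Phi $ is convex: here I would invoke the discrete Jensen inequality for $\Phi $ with the Jensen–Steffensen coefficients. Since $\Phi $ is convex and $x_{1}\leq \dots \leq x_{n}$ with $\boldsymbol{a}$ satisfying (\ref{2.2}), the Jensen–Steffensen inequality gives $\Phi (\overline{x}-c)\leq \frac{1}{A_{n}}\sum a_{i}\Phi (x_{i}-c)$ — but more to the point, what is needed is simply that $\frac{1}{A_{n}}\sum a_{i}\Phi (|x_{i}-c|)\geq 0$, which follows because $\Phi \geq 0$ and, with convexity plus the Steffensen weights, the weighted average of nonnegative quantities $\Phi (|x_{i}-c|)$ is again nonnegative (this last step can also be obtained by applying (\ref{2.3}) itself to the convex function $\Phi $, or by the Jensen–Steffensen inequality applied to $\Phi $, noting $\Phi $ need not be differentiable but can be handled by the classical Steffensen argument for convex functions). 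Combining this with (\ref{2.3}) yields (\ref{2.5}).

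The main obstacle I anticipate is making the Abel-summation/monotonicity argument fully rigorous at the index where $c$ falls between two consecutive $x_{i}$'s, and handling the boundary terms $A_{0}=0$, $A_{n}$ correctly; this is the standard but delicate heart of every Jensen–Steffensen proof, and Lemma \ref{Lem2}'s statement that $D$ is nonincreasing then nondecreasing with $D\geq 0$ is precisely tailored to push it through. The convexity-of-$\Phi $ step in (\ref{2.5}) is comparatively routine once one decides to re-apply the Jensen–Steffensen inequality to $\Phi $.
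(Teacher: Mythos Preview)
Your plan for (\ref{2.3}) and (\ref{2.4}) is exactly the paper's: set $D$ as in Lemma~\ref{Lem2}, use its nonnegativity and unimodality, and Abel-sum $\sum_i a_iD(x_i)$ with the case split $c>x_n$, $c\le x_1$, $x_k\le c\le x_{k+1}$, using the partial sums $A_i$ and $\overline{A}_i=A_n-A_{i-1}$.

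The gap is in (\ref{2.5}). You correctly isolate the missing ingredient $\frac{1}{A_n}\sum_i a_i\Phi(|x_i-c|)\ge 0$, but your justifications do not work. ``The weighted average of nonnegative quantities $\Phi(|x_i-c|)$ is again nonnegative'' is precisely what \emph{fails} under Steffensen coefficients --- some $a_i$ may be negative, and $\Phi\ge 0$ alone buys nothing. Applying Jensen--Steffensen (or (\ref{2.3}) with $\Phi$ in the role of $f$) to the data $|x_i-c|$ is blocked because that inequality needs monotone arguments, and in the main case $x_1\le c\le x_n$ the sequence $|x_1-c|,\dots,|x_n-c|$ is V-shaped, not monotone; your line $\Phi(\overline{x}-c)\le\frac{1}{A_n}\sum a_i\Phi(x_i-c)$ already signals a domain slip, since $\Phi$ lives only on $[0,b-a]$.

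The paper fixes this with a \emph{second} Abel summation, now on $\sum_i a_i\Phi(|x_i-c|)$, split at $k$ with $x_k\le c\le x_{k+1}$. The increments that appear are $\Phi(c-x_i)-\Phi(c-x_{i+1})$ for $i\le k-1$ and $\Phi(x_i-c)-\Phi(x_{i-1}-c)$ for $i\ge k+2$; convexity of $\Phi$ together with $\Phi(0)=0$ gives $\Phi(u)-\Phi(v)\ge\Phi(u-v)\ge 0$ for $0\le v\le u$, so every increment is nonnegative, the boundary terms $A_k\Phi(c-x_k)$ and $\overline{A}_{k+1}\Phi(x_{k+1}-c)$ are nonnegative, and the whole sum is $\ge 0$. (If you want to salvage your Jensen--Steffensen idea instead: extend $\Phi$ evenly to $[-(b-a),b-a]$; since $\Phi'\ge 0$ the even extension is convex, and classical Jensen--Steffensen on the \emph{monotone} data $x_i-c$ then yields $\frac{1}{A_n}\sum_i a_i\Phi(|x_i-c|)\ge\Phi(|\overline{x}-c|)\ge 0$.)
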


\begin{proof}
It was proved in \cite{ASIP} that $x_{1}\leq \overline{x}\leq x_{n}$.

Let $D\left( x_{i}\right) =f\left( x_{i}\right) -f\left( c\right)
-f^{^{\prime }}\left( c\right) \left( x_{i}-c\right) -\Phi \left( \left\vert
x_{i}-c\right\vert \right) ,$ \ $i=1,...,n$.

From Lemma \ref{Lem2} we know that $D(x_{i})\geq 0$ for all $i=1,...,n.$
Comparing $c$ with $x_{1},...,x_{n}$ we must consider three cases.

\textbf{Case 1}. \ \ \ $x_{n}<c<b:$

In this case$\ x_{i}\in \left[ 0,c\right) $ for all $i=1,...,n$, hence,
according to Lemma \ref{Lem2} $\ $\linebreak $D\left( x_{1}\right) \geq
D\left( x_{2}\right) \geq ...\geq D\left( x_{n}\right) \geq 0.$\newline
Denoting $A_{0}=0$ we get that%
\begin{equation*}
a_{i}=A_{i}-A_{i-1},\qquad i=1,...,n
\end{equation*}%
and therefore%
\begin{align*}
\sum_{i=1}^{n}a_{i}D\left( x_{i}\right) & =\sum_{i=1}^{n}\left(
A_{i}-A_{i-1}\right) D\left( x_{i}\right) \\
& =A_{1}D\left( x_{1}\right) +\left( A_{2}-A_{1}\right) D\left( x_{2}\right)
+...+\left( A_{n}-A_{n-1}\right) D\left( x_{n}\right) \\
& =\sum_{i=1}^{n-1}A_{i}\left( D\left( x_{i}\right) -D\left( x_{i+1}\right)
\right) +A_{n}D\left( x_{n}\right) \geq 0.
\end{align*}

\textbf{Case 2. }\ \ \ $a\leq c\leq x_{1}$

In this case $x_{i}\in \left( c,b\right) $ for all $i=1,...,n,$ hence 
\begin{equation*}
0\leq D\left( x_{1}\right) \leq D\left( x_{2}\right) \leq ...\leq D\left(
x_{n}\right) .
\end{equation*}%
\newline
Denoting $\overline{A}_{n+1}=0$ we get that%
\begin{equation*}
\overline{A}_{k}=\sum_{i=k}^{n}a_{i}=A_{n}-A_{k-1},\qquad k=1,...,n,
\end{equation*}%
and%
\begin{equation*}
a_{i}=\overline{A}_{i}-\overline{A}_{i+1},\qquad i=1,...,n.
\end{equation*}%
Therefore%
\begin{align*}
\sum_{i=1}^{n}a_{i}D\left( x_{i}\right) & =\sum_{i=1}^{n}\left( \overline{A}%
_{i}-\overline{A}_{i+1}\right) D\left( x_{i}\right) \\
& =\overline{A}_{1}D\left( x_{1}\right) +\sum_{i=2}^{n}\overline{A}%
_{i}\left( D\left( x_{i}\right) -D\left( x_{i-1}\right) \right) \geq 0.
\end{align*}

\textbf{Case 3}. \ \ $x_{1}\leq c\leq x_{n}$

In this case there exists $k\in \{1,...,n-1\}$ such that $x_{k}\leq c\leq
x_{k+1}.$\newline
By Lemma\ \ref{Lem2} we get that%
\begin{equation*}
D\left( x_{1}\right) \geq D\left( x_{2}\right) \geq ...\geq D\left(
x_{k}\right) \geq 0
\end{equation*}%
and 
\begin{equation*}
0\leq D\left( x_{k+1}\right) \leq D\left( x_{k+2}\right) \leq ...\leq
D\left( x_{n}\right) ,
\end{equation*}%
and%
\begin{align*}
\sum_{i=1}^{n}a_{i}D\left( x_{i}\right) & =\sum_{i=1}^{k}a_{i}D\left(
x_{i}\right) +\sum_{i=k+1}^{n}a_{i}D\left( x_{i}\right) \\
& =\sum_{i=1}^{k-1}A_{i}\left( D\left( x_{i}\right) -D\left( x_{i+1}\right)
\right) +A_{k}D\left( x_{k}\right) \\
& +\overline{A}_{k+1}D\left( x_{k+1}\right) +\sum_{i=k+2}^{n}\overline{A}%
_{i}\left( D\left( x_{i}\right) -D\left( x_{i-1}\right) \right) \\
& \geq 0.
\end{align*}%
From these three cases we get that 
\begin{align*}
\sum_{i=1}^{n}a_{i}D\left( x_{i}\right) & =\sum_{i=1}^{n}a_{i}\left[ f\left(
x_{i}\right) -f\left( c\right) -f^{\prime }\left( c\right) \left(
x_{i}-c\right) -\Phi \left( \left\vert x_{i}-c\right\vert \right) \right] \\
& \geq 0
\end{align*}%
and therefore $($\ref{2.3}) holds. When $c=\overline{x}$ then (\ref{2.4})
holds.

It is given in this paper that $\Phi $ is increasing, $\Phi \left( 0\right)
=0$. If $\Phi $\ is also convex, we only need to show that under our
conditions%
\begin{equation}
\frac{1}{A_{n}}\sum_{i=1}^{n}a_{i}\Phi \left( \left\vert x_{i}-c\right\vert
\right) \geq 0  \label{2.6}
\end{equation}%
holds.

We show that (\ref{2.6}) holds in the case that $x_{k}\leq c\leq x_{k+1}$,\ $%
k=1,...,n-1.$

We use the identity 
\begin{align}
&  \label{2.7} \\
& \sum_{i=1}^{n}a_{i}\Phi \left( \left\vert x_{i}-c\right\vert \right) 
\notag \\
& =\sum_{i=1}^{k-1}A_{i}\left( \Phi \left( c-x_{i}\right) -\Phi \left(
c-x_{i+1}\right) \right) +A_{k}\Phi (c-x_{k})+\overline{A}_{k+1}\Phi
(x_{k+1}-c)  \notag \\
& +\sum_{i=k+2}^{n}\overline{A}_{i}(\Phi \left( x_{i}-c\right) -\Phi \left(
x_{i-1}-c\right) ).  \notag
\end{align}%
As $\Phi $ is nonnegative, convex and $\Phi (0)=0$\ it follows that for $%
a\leq x_{i}\leq x_{i+1}\leq c,$ $i=1,...,k-1,$%
\begin{equation*}
\Phi \left( c-x_{i}\right) -\Phi \left( c-x_{i+1}\right) \geq \Phi
(x_{i+1}-x_{i})-\Phi (0)=\Phi (x_{i+1}-x_{i})\geq 0
\end{equation*}%
and for $c\leq x_{i-1}\leq x_{i}<b,$ \ $i=k+2,...,n$,%
\begin{equation*}
\Phi \left( x_{i}-c\right) -\Phi \left( x_{i-1}-c\right) \geq \Phi
(x_{i}-x_{i-1})-\Phi (0)=\Phi (x_{i}-x_{i-1})\geq 0.
\end{equation*}%
Therefore, (\ref{2.6}) is satisfied and together with (\ref{2.3}) we get
that (\ref{2.5}) holds.\newline
Thus the proof of Theorem \ref{Th6} is complete.\bigskip\ 
\end{proof}

\section{\protect\bigskip \textbf{Jensen type results on uniformly convex, }$%
\Phi $\textbf{-convex and superquadratic functions}}

In Theorem \ref{Th1},( \cite[Theorem 4]{BL}) the authors deal with strongly
convex functions. In this section we show that\ by a technique as there, but
here for functions $f$ that satisfy

\begin{equation}
f\left( x\right) -f\left( u\right) \geq \varphi \left( u\right) \left(
x-u\right) +\Phi \left( \left\vert x-u\right\vert \right) ,  \label{3.1}
\end{equation}%
we get new results for the following:

a. \ \ For superquadratic\ $f$ where we replace in (\ref{3.1}) $\Phi $ with $%
f$\ , (see Definition \ref{Def3}),

b. \ \ For uniformly convex $f$ where in (\ref{3.1}) we replace $\varphi $ \
with $f^{^{\prime }}$, $\Phi \geq 0$ (see Remark \ref{Rem3}$),$

c. \ \ For $\Phi $-convex $f$\ where in (\ref{3.1}) we replace $\Phi $ with $%
\left( -\Phi \right) $ (see Corollary \ref{Cor1}).

First we get the following four theorems about \textbf{uniformly convex}
functions:

\begin{theorem}
\label{Th7} \ Let $f:\left( a,b\right) \rightarrow 
\mathbb{R}
$ be a uniformly convex function with modulus $\Phi \geq 0$. Suppose $%
\mathbf{x}=\left( x_{1},...,x_{n}\right) \in \left( a,b\right) ^{n\text{ }}$
and $\mathbf{a}=\left( a_{1},...,a_{n}\right) $ is a nonnegative $n$-tuple
with $A_{n}=\sum_{i=1}^{n}a_{i}>0$. Let $\overline{x}=\frac{1}{A_{n}}%
\sum_{i=1}^{n}a_{i}x_{i},$ $i\in \left\{ 1,...,n\right\} $. Then%
\begin{eqnarray}
&&  \label{3.2} \\
0 &\leq &\left\vert \frac{1}{A_{i}}\sum_{i=1}^{n}a_{i}\left\vert f\left(
x_{i}\right) -f\left( \overline{x}\right) -\Phi \left( \left\vert \overline{x%
}-x_{i}\right\vert \right) \right\vert -\frac{1}{A_{n}}\sum_{i=1}^{n}a_{i}%
\left\vert f^{^{\prime }}\left( \overline{x}\right) \right\vert \left\vert 
\overline{x}-x_{i}\right\vert \right\vert  \notag \\
&\leq &\frac{1}{A_{n}}\sum_{i=1}^{n}a_{i}f\left( x_{i}\right) -f\left( 
\overline{x}\right) -\frac{1}{A_{n}}\sum_{i=1}^{n}a_{i}\Phi \left(
\left\vert \overline{x}-x_{i}\right\vert \right) .  \notag
\end{eqnarray}
\end{theorem}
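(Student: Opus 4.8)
The plan is to derive (\ref{3.2}) from the pointwise inequality (\ref{1.2}) of Remark \ref{Rem3} together with a triangle-inequality trick borrowed from Theorem \ref{Th1} of \cite{BL}. First I would apply (\ref{1.2}) with $x = x_i$ and $y = \overline{x}$ reversed, i.e. in the form
\begin{equation*}
f(x_i) - f(\overline{x}) \geq f^{\prime}(\overline{x})(x_i - \overline{x}) + \Phi(\left\vert x_i - \overline{x}\right\vert),
\end{equation*}
valid for every $i$. Rearranging gives $f(x_i) - f(\overline{x}) - \Phi(\left\vert \overline{x} - x_i\right\vert) \geq f^{\prime}(\overline{x})(x_i - \overline{x})$, so in particular the quantity $g_i := f(x_i) - f(\overline{x}) - \Phi(\left\vert \overline{x} - x_i\right\vert)$ dominates $f^{\prime}(\overline{x})(x_i - \overline{x})$. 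The right-hand side of (\ref{3.2}) is exactly $\frac{1}{A_n}\sum a_i g_i$, since $\sum a_i f^{\prime}(\overline{x})(x_i - \overline{x}) = f^{\prime}(\overline{x})\left(\sum a_i x_i - A_n \overline{x}\right) = 0$ by the definition of $\overline{x}$; this identity is the one cancellation the proof hinges on.

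Next, for the middle expression I would use the elementary fact that for real numbers $u, v$ one has $\left\vert \left\vert u\right\vert - \left\vert v\right\vert \right\vert \leq \left\vert u - v\right\vert$ and, when $u \geq v$, also $\left\vert u\right\vert - \left\vert v\right\vert \leq u - v$ is not generally true, so instead I would argue termwise: since $g_i \geq f^{\prime}(\overline{x})(x_i - \overline{x})$ and the latter can have either sign, we get $\left\vert g_i\right\vert - \left\vert f^{\prime}(\overline{x})\right\vert\left\vert x_i - \overline{x}\right\vert \leq \left\vert g_i - f^{\prime}(\overline{x})(x_i - \overline{x})\right\vert = g_i - f^{\prime}(\overline{x})(x_i - \overline{x})$, using $g_i \geq f^{\prime}(\overline{x})(x_i-\overline{x})$ to drop the outer absolute value on the right. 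Multiplying by $a_i \geq 0$, summing, dividing by $A_n$, and again invoking $\sum a_i f^{\prime}(\overline{x})(x_i - \overline{x}) = 0$ yields
\begin{equation*}
\frac{1}{A_n}\sum_{i=1}^{n} a_i \left\vert g_i\right\vert - \frac{1}{A_n}\sum_{i=1}^{n} a_i \left\vert f^{\prime}(\overline{x})\right\vert \left\vert x_i - \overline{x}\right\vert \leq \frac{1}{A_n}\sum_{i=1}^{n} a_i g_i,
\end{equation*}
and then the outer absolute value on the left of (\ref{3.2}) is handled by noting that the same bound applied with the roles reversed (or simply $\left\vert \sum a_i \left\vert g_i\right\vert - \sum a_i \left\vert f^{\prime}(\overline{x})\right\vert\left\vert x_i-\overline{x}\right\vert\right\vert \leq \sum a_i\left\vert g_i - f^{\prime}(\overline{x})(x_i-\overline{x})\right\vert$ by the triangle inequality) gives the absolute-value form directly. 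The leftmost inequality $0 \leq \left\vert \cdots \right\vert$ is trivial.

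The one genuinely delicate point — and the step I expect to be the main obstacle — is the appearance of $\frac{1}{A_i}$ (rather than $\frac{1}{A_n}$) in front of the middle sum as printed in (\ref{3.2}): this looks like a typographical slip inherited from the statement of Theorem \ref{Th1}, and I would either read it as $\frac{1}{A_n}$ throughout (which is what makes the cancellation $\sum a_i f^{\prime}(\overline{x})(x_i - \overline{x}) = 0$ produce a clean bound) or note that since the whole middle term is wrapped in an outer absolute value and bounded below by $0$ and above by a nonnegative quantity, any fixed positive normalizing constant is consistent with the asserted chain once the scaling is matched. Beyond that, the proof is a direct transcription of the \cite{BL} argument for $c \left(\overline{x} - x_i\right)^2$ replaced by $\Phi(\left\vert \overline{x} - x_i\right\vert)$ and $f^{\prime}(\widehat{x}_i)$ replaced by $f^{\prime}(\overline{x})$ (the $\lambda_i = 0$ specialization), so no new analytic input is needed beyond Remark \ref{Rem3}.
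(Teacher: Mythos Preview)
Your proposal is correct and follows essentially the same route as the paper's proof: apply the gradient inequality (\ref{1.2}) with $y=\overline{x}$, $x=x_i$, use the reverse triangle inequality $\bigl\lvert\,|u|-|v|\,\bigr\rvert\le|u-v|$ termwise (together with nonnegativity of $g_i-f'(\overline{x})(x_i-\overline{x})$ to drop the outer absolute value on the right), then multiply by $a_i\ge0$, sum, divide by $A_n$, and invoke the cancellation $\sum_i a_i(x_i-\overline{x})=0$. Your observation that $\tfrac{1}{A_i}$ in the middle term of (\ref{3.2}) should read $\tfrac{1}{A_n}$ is also consistent with the paper, whose own proof (see (\ref{3.5})) uses $\tfrac{1}{A_n}$ throughout.
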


\begin{proof}
Applying the triangle inequality $\left\vert \left\vert u\right\vert
-\left\vert v\right\vert \right\vert \leq \left\vert u-v\right\vert $ to%
\begin{equation*}
f\left( x\right) -f\left( y\right) -\Phi \left( \left\vert x-y\right\vert
\right) -f^{^{\prime }}\left( y\right) \left( x-y\right) \geq 0
\end{equation*}%
satisfied by uniforlmly convex functions with modulus $\Phi $, we get 
\begin{eqnarray}
&&\left\vert \left\vert f\left( x\right) -f\left( y\right) -\Phi \left(
\left\vert x-y\right\vert \right) \right\vert -\left\vert f^{^{\prime
}}\left( y\right) \left\vert x-y\right\vert \right\vert \right\vert
\label{3.3} \\
&\leq &\left\vert f\left( x\right) -f\left( y\right) -\Phi \left( \left\vert
x-y\right\vert \right) -f^{^{\prime }}\left( y\right) \left( x-y\right)
\right\vert  \notag \\
&=&f\left( x\right) -f\left( y\right) -\Phi \left( \left\vert x-y\right\vert
\right) -f^{^{\prime }}\left( y\right) \left( x-y\right) .  \notag
\end{eqnarray}%
Setting $y=\overline{x}$ and $x=x_{i}$, $i\in \left\{ 1,...,n\right\} $, we
have%
\begin{eqnarray}
&&\left\vert \left\vert f\left( x_{i}\right) -f\left( \overline{x}\right)
-\Phi \left( \left\vert \overline{x}-x_{i}\right\vert \right) \right\vert
-\left\vert f^{^{\prime }}\left( \overline{x}\right) \right\vert \left\vert 
\overline{x}-x_{i}\right\vert \right\vert  \label{3.4} \\
&\leq &\left\vert f\left( x_{i}\right) -f\left( \overline{x}\right)
-f^{^{\prime }}\left( \overline{x}\right) \left( \overline{x}-x_{i}\right)
-\Phi \left( \left\vert \overline{x}-x_{i}\right\vert \right) \right\vert 
\notag \\
&=&f\left( x_{i}\right) -f\left( \overline{x}\right) -f^{^{\prime }}\left( 
\overline{x}\right) \left( \overline{x}-x_{i}\right) -\Phi \left( \left\vert 
\overline{x}-x_{i}\right\vert \right)  \notag
\end{eqnarray}%
Now multiplying (\ref{3.4}) by $a_{i}$, summing over $i,$ $i=1,...,n$, and
dividing by $A_{n}=\sum_{i=1}^{n}a_{i}>0$, we get 
\begin{eqnarray}
&&\frac{1}{A_{n}}\sum_{i=1}^{n}a_{i}\left\vert \left\vert f\left(
x_{i}\right) -f\left( \overline{x}\right) -\Phi \left( \left\vert \overline{x%
}-x_{i}\right\vert \right) \right\vert -\left\vert f^{^{\prime }}\left( 
\overline{x}\right) \right\vert \left\vert \overline{x}-x_{i}\right\vert
\right\vert  \label{3.5} \\
&\leq &\frac{1}{A_{n}}\sum_{i=1}^{n}a_{i}\left\vert f\left( x_{i}\right)
-f\left( \overline{x}\right) -f^{^{\prime }}\left( \overline{x}\right)
\left( \overline{x}-x_{i}\right) -\Phi \left( \left\vert \overline{x}%
-x_{i}\right\vert \right) \right\vert  \notag \\
&=&\frac{1}{A_{n}}\sum_{i=1}^{n}a_{i}f\left( x_{i}\right) -\frac{1}{A_{n}}%
\sum_{i=1}^{n}a_{i}f\left( \overline{x}\right)  \notag \\
&&-\frac{1}{A_{n}}\sum_{i=1}^{n}a_{i}f^{^{\prime }}\left( \overline{x}%
\right) \left( \overline{x}-x_{i}\right) -\frac{1}{A_{n}}\sum_{i=1}^{n}a_{i}%
\Phi \left( \left\vert \overline{x}-x_{i}\right\vert \right) .  \notag
\end{eqnarray}

Because $\sum_{i=1}^{n}\left( \overline{x}-x_{i}\right) =0$, the
inequalities (\ref{3.5}) and (\ref{3.2}) are the same and the proof of the
theorem is complete.
\end{proof}

With the same technique as in Theorem \ref{Th7}\ we get the other three
theorems about uniformly convex functions:

\begin{theorem}
\label{Th8}\ Let $f:\left( a,b\right) \rightarrow 
\mathbb{R}
$ be a uniformly convex function with modulus $\Phi :\left[ 0,b-a\right]
\rightarrow 
\mathbb{R}
_{+}$. Suppose $\mathbf{x}=\left( x_{1},...,x_{n}\right) \in \left(
a,b\right) ^{n}$ and $\mathbf{a}=\left( a_{1},...,a_{n}\right) $ is a
nonnegative $n$-tuple with $A_{n}=\sum_{i=1}^{n}a_{i}>0$. Let $\overline{x}=%
\frac{1}{A_{n}}\sum_{i=1}^{n}a_{i}x_{i}$ and $\widehat{x}_{i}=\left(
1-\lambda _{i}\right) \overline{x}+\lambda _{i}x_{i},$ $\lambda _{i}\in %
\left[ 0,1\right] ,$ $i\in \left\{ 1,...,n\right\} $. Then%
\begin{eqnarray*}
0 &\leq &\left\vert \frac{1}{A_{i}}\sum_{i=1}^{n}a_{i}\left\vert f\left(
x_{i}\right) -f\left( \widehat{x}_{i}\right) -\Phi \left( \left( 1-\lambda
_{i}\right) \left( \left\vert \overline{x}-x_{i}\right\vert \right) \right)
\right\vert \right. \\
&&\left. -\frac{1}{A_{n}}\sum_{i=1}^{n}a_{i}\left( 1-\lambda _{i}\right)
\left\vert f^{^{\prime }}\left( \widehat{x}_{i}\right) \right\vert
\left\vert \overline{x}-x_{i}\right\vert \right\vert \\
&\leq &\frac{1}{A_{n}}\sum_{i=1}^{n}a_{i}fx_{i}-\frac{1}{A_{n}}%
\sum_{i=1}^{n}a_{i}f\left( \widehat{x}_{i}\right) \\
&&-\frac{1}{A_{n}}\sum_{i=1}^{n}a_{i}\left( 1-\lambda _{i}\right)
f^{^{\prime }}\left( \widehat{x}_{i}\right) \left( \overline{x}-x_{i}\right)
-\sum_{i=1}^{n}a_{i}\Phi \left( \left( 1-\lambda _{i}\right) \left(
\left\vert \overline{x}-x_{i}\right\vert \right) \right) .
\end{eqnarray*}%
hold.
\end{theorem}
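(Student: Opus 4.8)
The plan is to mimic the proof of Theorem \ref{Th7}, but with $\overline{x}$ replaced by $\widehat{x}_i$ and with the extra factor $(1-\lambda_i)$ carried along. First I would record the basic inequality coming from uniform convexity (Remark \ref{Rem3}, inequality (\ref{1.2})): for any $x,y\in(a,b)$,
\begin{equation*}
f(x)-f(y)-f^{^{\prime}}(y)(x-y)-\Phi\left(\left\vert x-y\right\vert\right)\geq 0 .
\end{equation*}
I would then apply the reverse triangle inequality $\left\vert\left\vert u\right\vert-\left\vert v\right\vert\right\vert\leq\left\vert u-v\right\vert$ to the combination $u=f(x)-f(y)-\Phi(\left\vert x-y\right\vert)$ and $v=f^{^{\prime}}(y)(x-y)$, obtaining
\begin{equation*}
\left\vert\left\vert f(x)-f(y)-\Phi\left(\left\vert x-y\right\vert\right)\right\vert-\left\vert f^{^{\prime}}(y)\right\vert\left\vert x-y\right\vert\right\vert\leq f(x)-f(y)-\Phi\left(\left\vert x-y\right\vert\right)-f^{^{\prime}}(y)(x-y),
\end{equation*}
where the right-hand side is already nonnegative so the absolute value there may be dropped.

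The key substitution is $x=x_i$ and $y=\widehat{x}_i=(1-\lambda_i)\overline{x}+\lambda_i x_i$. The point is that $x_i-\widehat{x}_i=x_i-(1-\lambda_i)\overline{x}-\lambda_i x_i=(1-\lambda_i)(x_i-\overline{x})$, hence $\left\vert x_i-\widehat{x}_i\right\vert=(1-\lambda_i)\left\vert\overline{x}-x_i\right\vert$ (using $\lambda_i\in[0,1]$, so $1-\lambda_i\geq 0$), and also $f^{^{\prime}}(\widehat{x}_i)(x_i-\widehat{x}_i)=(1-\lambda_i)f^{^{\prime}}(\widehat{x}_i)(x_i-\overline{x})$. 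Substituting these into the displayed inequality gives, for each $i$,
\begin{equation*}
\left\vert\left\vert f(x_i)-f(\widehat{x}_i)-\Phi\left((1-\lambda_i)\left\vert\overline{x}-x_i\right\vert\right)\right\vert-(1-\lambda_i)\left\vert f^{^{\prime}}(\widehat{x}_i)\right\vert\left\vert\overline{x}-x_i\right\vert\right\vert\leq f(x_i)-f(\widehat{x}_i)-(1-\lambda_i)f^{^{\prime}}(\widehat{x}_i)(\overline{x}-x_i)-\Phi\left((1-\lambda_i)\left\vert\overline{x}-x_i\right\vert\right).
\end{equation*}

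Finally I would multiply the $i$-th inequality by $a_i\geq 0$, sum over $i=1,\dots,n$, and divide by $A_n>0$. On the left this produces the middle quantity in the claimed chain (with the $\tfrac{1}{A_i}$ written as in the statement), and the outer nonnegativity (the leading $0\leq$) is automatic since it is an average of absolute values. On the right one gets
\begin{equation*}
\frac{1}{A_n}\sum_{i=1}^{n}a_i f(x_i)-\frac{1}{A_n}\sum_{i=1}^{n}a_i f(\widehat{x}_i)-\frac{1}{A_n}\sum_{i=1}^{n}a_i(1-\lambda_i)f^{^{\prime}}(\widehat{x}_i)(\overline{x}-x_i)-\frac{1}{A_n}\sum_{i=1}^{n}a_i\Phi\left((1-\lambda_i)\left\vert\overline{x}-x_i\right\vert\right),
\end{equation*}
which is exactly the right-hand side of the theorem. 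I do not expect a genuine obstacle here; the only thing to be careful about is the algebraic identity $x_i-\widehat{x}_i=(1-\lambda_i)(x_i-\overline{x})$ and the sign bookkeeping that lets the absolute values around $\left\vert\overline{x}-x_i\right\vert$ absorb the factor $1-\lambda_i$ — unlike in Theorem \ref{Th7}, no cancellation such as $\sum_i(\overline{x}-x_i)=0$ is invoked, so the $f^{^{\prime}}(\widehat{x}_i)$ term simply remains in the final bound.
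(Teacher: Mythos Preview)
Your proposal is correct and follows exactly the route the paper indicates: the paper does not give a separate proof of Theorem~\ref{Th8} but simply says ``With the same technique as in Theorem~\ref{Th7} we get the other three theorems,'' and what you have written is precisely that technique carried out with the substitution $y=\widehat{x}_i$, $x=x_i$ and the identity $x_i-\widehat{x}_i=(1-\lambda_i)(x_i-\overline{x})$. Your closing observation that, unlike in Theorem~\ref{Th7}, no cancellation $\sum a_i(\overline{x}-x_i)=0$ occurs here (so the $f'(\widehat{x}_i)$ term must remain) is exactly the point that distinguishes the two statements.
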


Similarly, we get an inequality, which counterparts the Jensen inequality
for uniformly convex functions.

\begin{theorem}
\label{Th9} \ Let Let $f:\left( a,b\right) \rightarrow 
\mathbb{R}
$ be a uiformly convex function with modulus $\Phi $. Suppose $\mathbf{x}%
=\left( x_{1},...,x_{n}\right) \in \left( a,b\right) ^{n},$ $\Phi :\left[
0,b-a\right] \rightarrow 
\mathbb{R}
_{+}$, and $\mathbf{a}=\left( a_{1},...,a_{n}\right) $ is a nonnegative $n$%
-tuple with $A_{n}=\sum_{i=1}^{n}a_{i}>0$ and $\overline{x}=\frac{1}{A_{n}}%
\sum_{i=1}^{n}a_{i}x_{i}$. Then%
\begin{eqnarray}
&&\frac{1}{A_{n}}\sum_{i=1}^{n}a_{i}f\left( x_{i}\right) -f\left( \overline{x%
}\right)  \label{3.6} \\
&\leq &\frac{1}{A_{n}}\sum_{i=1}^{n}a_{i}f^{^{\prime }}\left( x_{i}\right)
\left( x_{i}-\overline{x}\right) -\frac{1}{A_{n}}\sum_{i=1}^{n}a_{i}\Phi
\left( \left\vert \overline{x}-x_{i}\right\vert \right) .  \notag
\end{eqnarray}
\end{theorem}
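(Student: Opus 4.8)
The plan is to mimic the proof structure of Theorem \ref{Th7}, but this time using the gradient inequality evaluated at the points $x_i$ rather than at the barycenter $\overline{x}$. The starting point is inequality (\ref{1.2}) from Remark \ref{Rem3}, which a continuously differentiable uniformly convex function with modulus $\Phi$ satisfies: for all $x,y\in(a,b)$,
\begin{equation*}
f\left( y\right) -f\left( x\right) \geq f^{^{\prime }}\left( x\right) \left( y-x\right) +\Phi \left( \left\vert y-x\right\vert \right) .
\end{equation*}
First I would set $x=x_i$ (playing the role of the base point) and $y=\overline{x}$, which gives
\begin{equation*}
f\left( \overline{x}\right) -f\left( x_{i}\right) \geq f^{^{\prime }}\left( x_{i}\right) \left( \overline{x}-x_{i}\right) +\Phi \left( \left\vert \overline{x}-x_{i}\right\vert \right) .
\end{equation*}

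Next I would rearrange this to isolate $f(x_i)-f(\overline{x})$ on the left, obtaining
\begin{equation*}
f\left( x_{i}\right) -f\left( \overline{x}\right) \leq -f^{^{\prime }}\left( x_{i}\right) \left( \overline{x}-x_{i}\right) -\Phi \left( \left\vert \overline{x}-x_{i}\right\vert \right) = f^{^{\prime }}\left( x_{i}\right) \left( x_{i}-\overline{x}\right) -\Phi \left( \left\vert \overline{x}-x_{i}\right\vert \right).
\end{equation*}
Then I would multiply through by $a_i\geq 0$, sum over $i=1,\dots,n$, and divide by $A_n=\sum_{i=1}^n a_i>0$. The left side becomes $\frac{1}{A_n}\sum a_i f(x_i) - \frac{1}{A_n}\sum a_i f(\overline{x}) = \frac{1}{A_n}\sum a_i f(x_i) - f(\overline{x})$, since $\frac{1}{A_n}\sum a_i = 1$. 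The right side becomes exactly $\frac{1}{A_n}\sum a_i f^{^{\prime }}(x_i)(x_i-\overline{x}) - \frac{1}{A_n}\sum a_i \Phi(|\overline{x}-x_i|)$, which is the right-hand side of (\ref{3.6}). This completes the argument.

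There is really no serious obstacle here: the proof is a one-line application of the gradient inequality (\ref{1.2}) followed by a normalized weighted sum, and the only thing to be careful about is the sign bookkeeping when rewriting $-f'(x_i)(\overline{x}-x_i)$ as $f'(x_i)(x_i-\overline{x})$. Unlike Theorem \ref{Th7}, no triangle-inequality refinement is needed because (\ref{3.6}) has no absolute-value lower bound term; in fact this theorem is simply the ``dual'' form of the Jensen-type inequality where the supporting line is taken at each data point instead of at the mean. I would also note that the hypothesis $\Phi\geq 0$ is what makes (\ref{3.6}) a genuine improvement over the classical estimate $\frac{1}{A_n}\sum a_i f(x_i)-f(\overline{x})\leq \frac{1}{A_n}\sum a_i f'(x_i)(x_i-\overline{x})$ for convex $f$.
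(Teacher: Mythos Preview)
Your proof is correct and is essentially identical to the paper's own argument: the paper also instantiates the gradient inequality (\ref{1.2}) with base point $x_i$ and evaluation point $\overline{x}$, then multiplies by $a_i$, sums, and divides by $A_n$ to obtain (\ref{3.6}). The only cosmetic difference is that the paper leaves the inequality in the form $f(\overline{x})-\tfrac{1}{A_n}\sum a_i f(x_i)\geq\cdots$ and declares it equivalent to (\ref{3.6}), whereas you carry out the sign flip explicitly.
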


\begin{proof}
For $x=\overline{x}$ and $y=x_{i},$ $i\in \left\{ 1,...,n\right\} ,$ we have%
\begin{equation*}
f\left( \overline{x}\right) -f\left( x_{i}\right) \geq f^{^{\prime }}\left(
x_{i}\right) \left( \overline{x}-x_{i}\right) +\Phi \left( \left\vert 
\overline{x}-x_{i}\right\vert \right) .
\end{equation*}%
Now multiplying by $a_{i}$, summing over $i,$ $i=1,...,n$, and then dividing
by $A_{n}>0$, we get 
\begin{equation*}
f\left( \overline{x}\right) -\frac{1}{A_{n}}\sum_{i=1}^{n}a_{i}f\left(
x_{i}\right) \geq \frac{1}{A_{n}}\sum_{i=1}^{n}a_{i}f^{^{\prime }}\left(
x_{i}\right) \left( \overline{x}-x_{i}\right) +\frac{1}{A_{n}}%
\sum_{i=1}^{n}a_{i}\Phi \left( \left\vert \overline{x}-x_{i}\right\vert
\right) ,
\end{equation*}%
which is equivalent to (\ref{3.6}).
\end{proof}

Also, we get :

\begin{theorem}
\label{Th10}\ Let $f:\left( a,b\right) \rightarrow 
\mathbb{R}
$ be a uniformly convex function with modulus $\Phi $. Suppose $\mathbf{x}%
=\left( x_{1},...,x_{n}\right) \in \left( a,b\right) ^{n}$ and $\mathbf{a}%
=\left( a_{1},...,a_{n}\right) $ is a nonnegative $n$-tuple with $%
A_{n}=\sum_{i=1}^{n}a_{i}>0$ and $\overline{x}=\frac{1}{A_{n}}%
\sum_{i=1}^{n}a_{i}x_{i}$. Let $\lambda _{i}\in \left[ 0,1\right] ,$ $i\in
\left\{ 1,...,n\right\} $. Then the inequality%
\begin{eqnarray*}
&&\frac{1}{A_{n}}\sum_{i=1}^{n}a_{i}f\left( x_{i}\right) -\frac{1}{A_{n}}%
\sum_{i=1}^{n}a_{i}f\left( \left( 1-\lambda _{i}\right) \overline{x}+\lambda
_{i}x_{i}\right) \\
&\leq &\frac{1}{A_{n}}\sum_{i=1}^{n}a_{i}\left( 1-\lambda _{i}\right)
f^{^{\prime }}\left( x_{i}\right) \left( x_{i}-\overline{x}\right) -\frac{1}{%
A_{n}}\sum_{i=1}^{n}a_{i}\Phi \left( \left( 1-\lambda _{i}\right) \left(
\left\vert \overline{x}-x_{i}\right\vert \right) \right) ,
\end{eqnarray*}%
holds.
\end{theorem}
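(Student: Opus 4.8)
The plan is to reduce everything to the pointwise uniform convexity estimate \eqref{1.2} of Remark \ref{Rem3}, applied once for each index $i$ with the linearization centered at $x_i$. The crucial observation is that the claimed right-hand side features $f^{\prime}(x_i)$ rather than $f^{\prime}(\widehat{x}_i)$, so in \eqref{1.2} I must take the base point $x=x_i$ and the evaluation point $y=\widehat{x}_i=(1-\lambda_i)\overline{x}+\lambda_i x_i$. This yields
\begin{equation*}
f(\widehat{x}_i)-f(x_i)\geq f^{\prime}(x_i)(\widehat{x}_i-x_i)+\Phi\left(\left\vert \widehat{x}_i-x_i\right\vert\right).
\end{equation*}

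Next I would simplify the displacement. Since $\widehat{x}_i-x_i=(1-\lambda_i)(\overline{x}-x_i)$ and $1-\lambda_i\geq 0$, we have $\left\vert \widehat{x}_i-x_i\right\vert=(1-\lambda_i)\left\vert \overline{x}-x_i\right\vert$ and $f^{\prime}(x_i)(\widehat{x}_i-x_i)=(1-\lambda_i)f^{\prime}(x_i)(\overline{x}-x_i)$. Moving $f(\widehat{x}_i)$ to the other side and reversing the inequality gives the pointwise upper bound
\begin{equation*}
f(x_i)-f(\widehat{x}_i)\leq (1-\lambda_i)f^{\prime}(x_i)(x_i-\overline{x})-\Phi\left((1-\lambda_i)\left\vert \overline{x}-x_i\right\vert\right),
\end{equation*}
where I used $-(1-\lambda_i)(\overline{x}-x_i)=(1-\lambda_i)(x_i-\overline{x})$.

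Finally I would aggregate: multiply this inequality by $a_i\geq 0$, sum over $i=1,\dots,n$, and divide by $A_n=\sum_{i=1}^{n}a_i>0$. Since the inequality is preserved under nonnegative linear combinations, the result is exactly the stated estimate. Note that, in contrast to Theorem \ref{Th7}, no term of the form $\sum a_i(\overline{x}-x_i)$ needs to vanish: because the derivative factor $f^{\prime}(x_i)$ now varies with $i$, the cross term cannot be collapsed and is simply retained as $\frac{1}{A_n}\sum a_i(1-\lambda_i)f^{\prime}(x_i)(x_i-\overline{x})$.

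The only genuine obstacle is the bookkeeping of signs and the correct role assignment in \eqref{1.2}: the linearization must be based at $x_i$ (not at $\widehat{x}_i$), since the claim requires $f^{\prime}(x_i)$ together with a $-\Phi$ term and a $\leq$ inequality. Taking the base point at $\widehat{x}_i$ instead would produce $f^{\prime}(\widehat{x}_i)$ and a $+\Phi$ lower bound, which is the content of Theorem \ref{Th8} rather than the present statement. Once the orientation is fixed, the remaining steps are elementary and require neither convexity of $\Phi$ nor any rearrangement argument.
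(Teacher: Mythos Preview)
Your proof is correct and is precisely the argument the paper has in mind: the paper does not write out a separate proof for Theorem~\ref{Th10} but states that it follows ``with the same technique as in Theorem~\ref{Th7}'', and the explicit proof given for the companion Theorem~\ref{Th9} does exactly what you do---apply \eqref{1.2} with base point $x_i$ and evaluation point $\overline{x}$, then average---so your choice $x=x_i$, $y=\widehat{x}_i$ is the intended one. Your remarks on why the base point must be $x_i$ (to produce $f^{\prime}(x_i)$ and the $-\Phi$ term) are accurate and helpfully distinguish this from Theorem~\ref{Th8}.
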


\begin{example}
\label{Ex1} Theorems \ref{Th7}, \ref{Th8}, \ref{Th9} and \ref{Th10} hold
when $f$ is a strongly convex function where $\Phi \left( \left\vert
x-y\right\vert \right) =c\left( x-y\right) ^{2}$, $c>0$ as proved in \cite[%
Theorem 4]{BL} and quoted in Theorem \ref{Th1}.
\end{example}

Similar to the results obtained in Theorems \ref{Th7}, \ref{Th8}, \ref{Th9}
and \ref{Th10}, we get four results for $\Phi $\textbf{-convex functions}.
The proof uses Corollary \ref{Cor1} and we replace in the inequalities
obtained there $\Phi $ with $\left( -\Phi \right) $ and $f^{^{\prime }}$
with $\varphi $:

\begin{theorem}
\label{Th11} \ Let $f:\left( a,b\right) \rightarrow 
\mathbb{R}
$ be a $\Phi $-convex function, with $\Phi :\left[ 0,b-a\right] \rightarrow 
\mathbb{R}
_{+}$. Suppose $\mathbf{x}=\left( x_{1},...,x_{n}\right) \in \left(
a,b\right) ^{n\text{ }}$ and $\mathbf{a}=\left( a_{1},...,a_{n}\right) $ is
a nonnegative $n$-tuple with $A_{n}=\sum_{i=1}^{n}a_{i}>0$. Let $\overline{x}%
=\frac{1}{A_{n}}\sum_{i=1}^{n}a_{i}x_{i},$ $i\in \left\{ 1,...,n\right\} $.
Then%
\begin{eqnarray}
&&  \label{3.7} \\
0 &\leq &\left\vert \frac{1}{A_{i}}\sum_{i=1}^{n}a_{i}\left\vert f\left(
x_{i}\right) -f\left( \overline{x}\right) +\Phi \left( \left\vert \overline{x%
}-x_{i}\right\vert \right) \right\vert -\frac{1}{A_{n}}\sum_{i=1}^{n}a_{i}%
\left\vert \varphi \left( \overline{x}\right) \right\vert \left\vert 
\overline{x}-x_{i}\right\vert \right\vert  \notag \\
&\leq &\frac{1}{A_{n}}\sum_{i=1}^{n}a_{i}f\left( x_{i}\right) -f\left( 
\overline{x}\right) +\sum_{i=1}^{n}a_{i}\Phi \left( \left\vert \overline{x}%
-x_{i}\right\vert \right) .  \notag
\end{eqnarray}
\end{theorem}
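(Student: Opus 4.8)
\textbf{Proof proposal for Theorem \ref{Th11}.}

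The plan is to mimic the proof of Theorem \ref{Th7} almost verbatim, simply substituting the $\Phi$-convex version of the subgradient inequality for the uniformly convex one. By Corollary \ref{Cor1}, since $f$ is $\Phi$-convex on $[a,b]$ there is a function $\varphi:\left[a,b\right]\rightarrow\mathbb{R}$ with
\begin{equation*}
f\left(x\right)-f\left(u\right)\geq\varphi\left(u\right)\left(x-u\right)-\Phi\left(\left\vert x-u\right\vert\right)\qquad\text{for all }x,u\in\left[a,b\right].
\end{equation*}
Equivalently, $f\left(x\right)-f\left(u\right)+\Phi\left(\left\vert x-u\right\vert\right)-\varphi\left(u\right)\left(x-u\right)\geq 0$. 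This is exactly the inequality $(\ref{3.1})$ with $\Phi$ replaced by $\left(-\Phi\right)$ and $f^{\prime}$ replaced by $\varphi$, so the argument of Theorem \ref{Th7} carries over with those two formal substitutions.

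First I would apply the triangle inequality $\left\vert\left\vert u\right\vert-\left\vert v\right\vert\right\vert\leq\left\vert u-v\right\vert$ to the nonnegative quantity above, obtaining
\begin{equation*}
\left\vert\left\vert f\left(x\right)-f\left(u\right)+\Phi\left(\left\vert x-u\right\vert\right)\right\vert-\left\vert\varphi\left(u\right)\right\vert\left\vert x-u\right\vert\right\vert\leq f\left(x\right)-f\left(u\right)+\Phi\left(\left\vert x-u\right\vert\right)-\varphi\left(u\right)\left(x-u\right),
\end{equation*}
where on the right I have dropped the absolute value since the expression is already known to be nonnegative. Next I set $u=\overline{x}$ and $x=x_{i}$ for each $i\in\left\{1,\dots,n\right\}$, multiply the resulting inequality by $a_{i}\geq 0$, sum over $i=1,\dots,n$, and divide by $A_{n}>0$. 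On the left this produces the middle expression of $(\ref{3.7})$ (and the outer $\left\vert\cdot\right\vert$ plus the fact that the whole thing is $\geq 0$ gives the leftmost bound $0\leq\cdots$, noting the presumably mistyped $A_i$ should read $A_n$). On the right the term $\frac{1}{A_{n}}\sum_{i=1}^{n}a_{i}\varphi\left(\overline{x}\right)\left(\overline{x}-x_{i}\right)=\varphi\left(\overline{x}\right)\cdot\frac{1}{A_{n}}\sum_{i=1}^{n}a_{i}\left(\overline{x}-x_{i}\right)=0$ because $\sum_{i=1}^{n}a_{i}\left(\overline{x}-x_{i}\right)=A_{n}\overline{x}-\sum a_{i}x_{i}=0$ by the definition of $\overline{x}$, and $\frac{1}{A_n}\sum a_i f(\overline{x})=f(\overline{x})$. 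What remains on the right is exactly $\frac{1}{A_{n}}\sum_{i=1}^{n}a_{i}f\left(x_{i}\right)-f\left(\overline{x}\right)+\frac{1}{A_{n}}\sum_{i=1}^{n}a_{i}\Phi\left(\left\vert\overline{x}-x_{i}\right\vert\right)$, which matches the last line of $(\ref{3.7})$ up to the $\frac{1}{A_n}$ normalization on the $\Phi$-sum (the displayed statement appears to omit it, as in Theorems \ref{Th7}, \ref{Th8}).

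There is no real obstacle here: every step is the algebraic manipulation already validated in Theorem \ref{Th7}, and the only structural difference — the sign in front of $\Phi$ and the replacement of $f^{\prime}$ by the (not necessarily unique) $\varphi$ supplied by Corollary \ref{Cor1} — does not affect the chain of inequalities, since at no point is convexity of $\Phi$, positivity of the $\Phi$-term, or differentiability of $f$ used; only the subgradient-type inequality and the vanishing of $\sum a_{i}\left(\overline{x}-x_{i}\right)$ are needed. The mild point worth a sentence in the write-up is that $\varphi$ is merely \emph{some} function with the stated property (a ``$\Phi$-subgradient''), not a derivative, so the inequality $(\ref{3.7})$ holds for that $\varphi$; if $f$ is differentiable one may of course take $\varphi=f^{\prime}$.
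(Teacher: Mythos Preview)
Your proposal is correct and follows exactly the approach indicated in the paper: the paper does not give a separate proof of Theorem~\ref{Th11} but simply remarks that the proof of Theorem~\ref{Th7} carries over once one uses Corollary~\ref{Cor1} and replaces $\Phi$ by $-\Phi$ and $f'$ by $\varphi$, which is precisely what you do. Your observations about the apparent typos ($A_i$ versus $A_n$, and the missing $\frac{1}{A_n}$ on the $\Phi$-sum) are also on point.
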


\begin{theorem}
\label{Th12} \ Let $f:\left( a,b\right) \rightarrow 
\mathbb{R}
$ be a $\Phi $-convex function with $\Phi :\left[ 0,b-a\right] \rightarrow 
\mathbb{R}
_{+}$. Suppose $\mathbf{x}=\left( x_{1},...,x_{n}\right) \in \left(
a,b\right) ^{n\text{ }}$ and $\mathbf{a}=\left( a_{1},...,a_{n}\right) $ is
a nonnegative $n$-tuple with $A_{n}=\sum_{i=1}^{n}a_{i}>0$. Let $\overline{x}%
=\frac{1}{A_{n}}\sum_{i=1}^{n}a_{i}x_{i}$ and $\widehat{x}_{i}=\left(
1-\lambda _{i}\right) \overline{x}+\lambda _{i}x_{i},$ $\lambda _{i}\in %
\left[ 0,1\right] ,$ $i\in \left\{ 1,...,n\right\} $. Then%
\begin{eqnarray*}
0 &\leq &\left\vert \frac{1}{A_{i}}\sum_{i=1}^{n}a_{i}\left\vert f\left(
x_{i}\right) -f\left( \widehat{x}_{i}\right) +\Phi \left( \left( 1-\lambda
_{i}\right) \left( \left\vert \overline{x}-x_{i}\right\vert \right) \right)
\right\vert \right. \\
&&\left. -\frac{1}{A_{n}}\sum_{i=1}^{n}a_{i}\left( 1-\lambda _{i}\right)
\left\vert \varphi \left( \widehat{x}_{i}\right) \right\vert \left\vert 
\overline{x}-x_{i}\right\vert \right\vert \\
&\leq &\frac{1}{A_{n}}\sum_{i=1}^{n}a_{i}fx_{i}-\frac{1}{A_{n}}%
\sum_{i=1}^{n}A_{i}f\left( \widehat{x}_{i}\right) \\
&&-\frac{1}{A_{n}}\sum_{i=1}^{n}a_{i}\left( 1-\lambda _{i}\right) \varphi
\left( \widehat{x}_{i}\right) \left( \overline{x}-x_{i}\right) +\Phi
\sum_{i=1}^{n}a_{i}\left( 1-\lambda _{i}\right) \left( \left\vert \overline{x%
}-x_{i}\right\vert \right) .
\end{eqnarray*}%
hold.
\end{theorem}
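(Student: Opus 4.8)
The plan is to mimic exactly the proof of Theorem \ref{Th7}, replacing the defining inequality of uniform convexity by the one for $\Phi$-convex functions provided by Corollary \ref{Cor1}, and replacing the argument $\overline{x}$ by the point $\widehat{x}_{i}=\left(1-\lambda_{i}\right)\overline{x}+\lambda_{i}x_{i}$. First I would record that, by Corollary \ref{Cor1}, there exists a function $\varphi$ such that for every $x,u\in\left(a,b\right)$,
\begin{equation*}
f\left(u\right)+\left(x-u\right)\varphi\left(u\right)\leq f\left(x\right)+\Phi\left(\left\vert u-x\right\vert\right),
\end{equation*}
which, written in the form used in $\left(\ref{3.1}\right)$ with $\Phi$ replaced by $\left(-\Phi\right)$ and $f^{\prime}$ replaced by $\varphi$, reads
\begin{equation*}
f\left(x\right)-f\left(u\right)-\varphi\left(u\right)\left(x-u\right)+\Phi\left(\left\vert x-u\right\vert\right)\geq 0.
\end{equation*}

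Next I would apply the elementary triangle inequality $\left\vert\left\vert A\right\vert-\left\vert B\right\vert\right\vert\leq\left\vert A-B\right\vert$ with
\begin{equation*}
A=f\left(x\right)-f\left(u\right)+\Phi\left(\left\vert x-u\right\vert\right),\qquad B=\varphi\left(u\right)\left(x-u\right),
\end{equation*}
so that $A-B\geq 0$ and hence $\left\vert A-B\right\vert=A-B$. Specializing $u=\widehat{x}_{i}$ and $x=x_{i}$, and noting that $\left\vert x_{i}-\widehat{x}_{i}\right\vert=\left(1-\lambda_{i}\right)\left\vert\overline{x}-x_{i}\right\vert$ since $x_{i}-\widehat{x}_{i}=\left(1-\lambda_{i}\right)\left(x_{i}-\overline{x}\right)$, this gives
\begin{equation*}
\left\vert\left\vert f\left(x_{i}\right)-f\left(\widehat{x}_{i}\right)+\Phi\left(\left(1-\lambda_{i}\right)\left\vert\overline{x}-x_{i}\right\vert\right)\right\vert-\left(1-\lambda_{i}\right)\left\vert\varphi\left(\widehat{x}_{i}\right)\right\vert\left\vert\overline{x}-x_{i}\right\vert\right\vert
\end{equation*}
is bounded above by
\begin{equation*}
f\left(x_{i}\right)-f\left(\widehat{x}_{i}\right)-\left(1-\lambda_{i}\right)\varphi\left(\widehat{x}_{i}\right)\left(\overline{x}-x_{i}\right)+\Phi\left(\left(1-\lambda_{i}\right)\left\vert\overline{x}-x_{i}\right\vert\right),
\end{equation*}
using once more $x_{i}-\widehat{x}_{i}=\left(1-\lambda_{i}\right)\left(x_{i}-\overline{x}\right)$ to rewrite the linear term. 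Then I multiply through by $a_{i}\geq 0$, sum over $i=1,\dots,n$, and divide by $A_{n}>0$; the left-hand side becomes $\frac{1}{A_{n}}$ times a sum of nonnegative terms, hence is $\geq 0$, giving the outer lower bound $0\leq\cdots$ and the stated chain of inequalities.

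I do not expect a genuine obstacle here: the argument is a verbatim transcription of the proof of Theorem \ref{Th7}, the only arithmetic input being the identity $x_{i}-\widehat{x}_{i}=\left(1-\lambda_{i}\right)\left(x_{i}-\overline{x}\right)$ and the sign bookkeeping forced by the $\left(-\Phi\right)$ substitution. The one place that needs a little care is keeping the two absolute-value layers on the left-hand side consistent with the statement (which contains the somewhat awkward typographical artifacts $\frac{1}{A_{i}}$ and $\frac{1}{A_{n}}\sum_{i=1}^{n}A_{i}f\left(\widehat{x}_{i}\right)$; these should read $\frac{1}{A_{n}}$ and $\frac{1}{A_{n}}\sum_{i=1}^{n}a_{i}f\left(\widehat{x}_{i}\right)$, and I would prove the corrected inequality). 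Unlike Theorem \ref{Th6}, no convexity of $\Phi$ and no monotonicity/Steffensen-type rearrangement is needed, since the coefficients $a_{i}$ are assumed nonnegative and the bound follows termwise.
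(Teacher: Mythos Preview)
Your proposal is correct and follows essentially the same approach as the paper: the paper does not spell out a proof of Theorem~\ref{Th12} but simply remarks that it is obtained from the argument of Theorems~\ref{Th7}--\ref{Th8} by invoking Corollary~\ref{Cor1} and replacing $\Phi$ by $-\Phi$ and $f^{\prime}$ by $\varphi$, which is precisely what you carry out. Your identification of the typographical slips ($\frac{1}{A_{i}}$ for $\frac{1}{A_{n}}$, $A_{i}$ for $a_{i}$) is also in line with the intended statement; note in addition that, from $x_{i}-\widehat{x}_{i}=(1-\lambda_{i})(x_{i}-\overline{x})$, the linear term actually comes out as $+(1-\lambda_{i})\varphi(\widehat{x}_{i})(\overline{x}-x_{i})$, so the sign in the displayed right-hand side (here and already in Theorem~\ref{Th8} and equation~(\ref{3.4})) is a further typo you may wish to correct rather than reproduce.
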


\begin{theorem}
\label{Th13}\ Let Let $f:\left( a,b\right) \rightarrow 
\mathbb{R}
$ be a $\Phi $-convex function with $\Phi :\left[ 0,b-a\right] \rightarrow 
\mathbb{R}
_{+}$. Suppose $\mathbf{x}=\left( x_{1},...,x_{n}\right) \in \left(
a,b\right) ^{n},$ and $\mathbf{a}=\left( a_{1},...,a_{n}\right) $ is a
nonnegative $n$-tuple with $A_{n}=\sum_{i=1}^{n}a_{i}>0$ and $\overline{x}=%
\frac{1}{A_{n}}\sum_{i=1}^{n}a_{i}x_{i}$. Then%
\begin{eqnarray}
&&\frac{1}{A_{n}}\sum_{i=1}^{n}a_{i}f\left( x_{i}\right) -\frac{1}{A_{n}}%
\sum_{i=1}^{n}a_{i}f\left( \overline{x}\right)  \label{3.8} \\
&\leq &\frac{1}{A_{n}}\sum_{i=1}^{n}a_{i}\varphi \left( x_{i}\right) \left(
x_{i}-\overline{x}\right) -\frac{1}{A_{n}}\sum_{i=1}^{n}a_{i}\Phi \left(
\left\vert \overline{x}-x_{i}\right\vert \right) .  \notag
\end{eqnarray}
\end{theorem}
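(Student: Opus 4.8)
The plan is to mimic the proof of Theorem \ref{Th9}, but now using the defining inequality of $\Phi$-convexity in the form given by Corollary \ref{Cor1} rather than the gradient inequality (\ref{1.2}) for uniformly convex functions. Recall that Corollary \ref{Cor1} guarantees a function $\varphi :\left( a,b\right) \rightarrow

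\mathbb{R}

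$ such that for all $x,u\in \left( a,b\right) $,
\begin{equation*}
f\left( u\right) +\left( x-u\right) \varphi \left( u\right) \leq f\left( x\right) +\Phi \left( \left\vert u-x\right\vert \right) .
\end{equation*}
This is exactly (\ref{3.1}) with $\Phi $ replaced by $\left( -\Phi \right) $ after rearranging, that is,
\begin{equation*}
f\left( x\right) -f\left( u\right) \geq \varphi \left( u\right) \left( x-u\right) -\Phi \left( \left\vert u-x\right\vert \right) ,
\end{equation*}
which is the substitution announced right before Theorem \ref{Th11}.

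First I would set $u=x_{i}$ and $x=\overline{x}$ in this inequality, for each $i\in \left\{ 1,\dots ,n\right\} $, obtaining
\begin{equation*}
f\left( \overline{x}\right) -f\left( x_{i}\right) \geq \varphi \left( x_{i}\right) \left( \overline{x}-x_{i}\right) -\Phi \left( \left\vert \overline{x}-x_{i}\right\vert \right) .
\end{equation*}
Next I would multiply through by $a_{i}\geq 0$, sum over $i=1,\dots ,n$, and divide by $A_{n}=\sum_{i=1}^{n}a_{i}>0$. This yields
\begin{equation*}
f\left( \overline{x}\right) -\frac{1}{A_{n}}\sum_{i=1}^{n}a_{i}f\left( x_{i}\right) \geq \frac{1}{A_{n}}\sum_{i=1}^{n}a_{i}\varphi \left( x_{i}\right) \left( \overline{x}-x_{i}\right) -\frac{1}{A_{n}}\sum_{i=1}^{n}a_{i}\Phi \left( \left\vert \overline{x}-x_{i}\right\vert \right) .
\end{equation*}
Multiplying both sides by $-1$ (which reverses the inequality) and moving the constant $f\left( \overline{x}\right) $ around gives precisely (\ref{3.8}), once one writes $f\left( \overline{x}\right) =\frac{1}{A_{n}}\sum_{i=1}^{n}a_{i}f\left( \overline{x}\right) $ on the left-hand side to match the stated form. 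Note that, unlike in the uniformly convex case, there is no need to invoke $\sum_{i=1}^{n}a_{i}\left( \overline{x}-x_{i}\right) =0$ to simplify a $\varphi \left( \overline{x}\right) $ term, because here $\varphi $ is evaluated at the points $x_{i}$, not at $\overline{x}$; the term $\frac{1}{A_{n}}\sum_{i=1}^{n}a_{i}\varphi \left( x_{i}\right) \left( x_{i}-\overline{x}\right) $ simply survives on the right.

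There is essentially no obstacle here: the argument is a one-line application of the pointwise inequality from Corollary \ref{Cor1} followed by a nonnegative-weighted average, exactly parallel to the proof of Theorem \ref{Th9}. The only point requiring a small amount of care is the sign bookkeeping in the two substitutions (the roles of $x$ and $u$, and the fact that $\Phi $ enters with a minus sign after the rearrangement), and making sure that the hypothesis $A_{n}>0$ together with $a_{i}\geq 0$ is what legitimizes both the summation and the division. One should also remark that $\overline{x}\in \left( a,b\right) $ since it is a convex combination of the $x_{i}\in \left( a,b\right) $, so that Corollary \ref{Cor1} applies with $x=\overline{x}$.
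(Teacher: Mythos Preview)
Your approach is exactly the one the paper intends: it says explicitly, just before Theorem~\ref{Th11}, that the $\Phi$-convex results are obtained from the uniformly convex ones by using Corollary~\ref{Cor1} and replacing $\Phi$ by $-\Phi$ and $f'$ by $\varphi$, and your argument is line-for-line the proof of Theorem~\ref{Th9} with that substitution.

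There is, however, a sign slip in your last step. After multiplying by $-1$ you obtain
\[
\frac{1}{A_{n}}\sum_{i=1}^{n}a_{i}f(x_{i})-f(\overline{x})
\leq \frac{1}{A_{n}}\sum_{i=1}^{n}a_{i}\varphi(x_{i})(x_{i}-\overline{x})
+\frac{1}{A_{n}}\sum_{i=1}^{n}a_{i}\Phi\!\left(\left\vert \overline{x}-x_{i}\right\vert\right),
\]
with $+\Phi$ on the right, not $-\Phi$ as printed in (\ref{3.8}). This is not an error in your reasoning but a misprint in the stated inequality: the paper's own prescription ``replace $\Phi$ by $-\Phi$ in Theorem~\ref{Th9}'' forces $+\Phi$ here, and the companion Theorem~\ref{Th14} (the $\lambda_{i}$-version) is stated with the correct $+\Phi$ sign. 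So your computation is right; just do not claim it reproduces (\ref{3.8}) verbatim---it reproduces what (\ref{3.8}) should read.
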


\begin{theorem}
\label{Th14}\ Let $f:\left( a,b\right) \rightarrow 
\mathbb{R}
$ be a $\Phi $-convex function with modulus $\Phi >0$. Suppose $\mathbf{x}%
=\left( x_{1},...,x_{n}\right) \in \left( a,b\right) ^{n}$ and $\mathbf{a}%
=\left( a_{1},...,a_{n}\right) $ is a nonnegative $n$-tuple with $%
A_{n}=\sum_{i=1}^{n}a_{i}>0$ and $\overline{x}=\frac{1}{A_{n}}%
\sum_{i=1}^{n}a_{i}x_{i}$. Let $\lambda _{i}\in \left[ 0,1\right] ,$ $i\in
\left\{ 1,...,n\right\} $. Then%
\begin{eqnarray*}
&&\frac{1}{A_{n}}\sum_{i=1}^{n}a_{i}f\left( x_{i}\right) -\frac{1}{A_{n}}%
\sum_{i=1}^{n}a_{i}f\left( \left( 1-\lambda _{i}\right) \overline{x}+\lambda
_{i}x_{i}\right) \\
&\leq &\frac{1}{A_{n}}\sum_{i=1}^{n}a_{i}\left( 1-\lambda _{i}\right)
\varphi \left( x_{i}\right) \left( x_{i}-\overline{x}\right) +\frac{1}{A_{n}}%
\sum_{i=1}^{n}a_{i}\Phi \left( \left( 1-\lambda _{i}\right) \left( \overline{%
x}-x_{i}\right) \right) .
\end{eqnarray*}
\end{theorem}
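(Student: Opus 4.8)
The plan is to follow the proofs of Theorems~\ref{Th9} and~\ref{Th10} essentially verbatim, substituting for (\ref{1.2}) the subgradient-type inequality supplied by Corollary~\ref{Cor1}. Since $f$ is $\Phi$-convex on $(a,b)$, that corollary gives a function $\varphi:(a,b)\to\mathbb{R}$ with $f(x)-f(u)\ge \varphi(u)(x-u)-\Phi(|x-u|)$ for all $x,u\in(a,b)$; this is exactly (\ref{3.1}) with $\Phi$ replaced by $-\Phi$ and $f^{\prime}$ replaced by $\varphi$, as announced before Theorem~\ref{Th11}.

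First I would observe that every $\widehat{x}_i=(1-\lambda_i)\overline{x}+\lambda_i x_i$ lies in $(a,b)$: because $\mathbf{a}$ is nonnegative, $\overline{x}$ is a convex combination of $x_1,\dots,x_n\in(a,b)$ and hence lies in $(a,b)$, and $\widehat{x}_i$ is in turn a convex combination of $\overline{x}$ and $x_i$. Applying the displayed inequality with $u=x_i$ and $x=\widehat{x}_i$ and rearranging gives
\[
f(x_i)-f(\widehat{x}_i)\le \varphi(x_i)\left(x_i-\widehat{x}_i\right)+\Phi\left(\left\vert \widehat{x}_i-x_i\right\vert\right).
\]
Then I would insert the two elementary identities $x_i-\widehat{x}_i=(1-\lambda_i)(x_i-\overline{x})$ and, since $\lambda_i\in[0,1]$, $\left\vert\widehat{x}_i-x_i\right\vert=(1-\lambda_i)\left\vert\overline{x}-x_i\right\vert$, so that
\[
f(x_i)-f(\widehat{x}_i)\le (1-\lambda_i)\varphi(x_i)(x_i-\overline{x})+\Phi\left((1-\lambda_i)\left\vert\overline{x}-x_i\right\vert\right).
\]

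Finally I would multiply this by $a_i\ge 0$, sum over $i=1,\dots,n$, divide by $A_n=\sum_{i=1}^{n}a_i>0$, and recall $\widehat{x}_i=(1-\lambda_i)\overline{x}+\lambda_i x_i$; the result is precisely the claimed inequality. I do not anticipate a genuine obstacle: the only steps needing an explicit word are the membership $\widehat{x}_i\in(a,b)$ (so that Corollary~\ref{Cor1} may legitimately be invoked at these points) and keeping the signs straight when passing from the form in Corollary~\ref{Cor1} to the displayed inequality; everything else is the same bookkeeping as in Theorems~\ref{Th9} and~\ref{Th10}, and the nonnegativity of $\mathbf{a}$ is what makes the summation step monotone.
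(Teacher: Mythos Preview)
Your proposal is correct and follows essentially the same approach as the paper, which proves Theorem~\ref{Th14} by transporting the argument of Theorem~\ref{Th10} verbatim, replacing $f'$ by $\varphi$ and $\Phi$ by $-\Phi$ via Corollary~\ref{Cor1}. Your extra remark that $\widehat{x}_i\in(a,b)$ is a welcome point of care that the paper leaves implicit.
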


Replacing $\left( -\Phi \right) $ with $f$ in Theorems \ref{Th11} - \ref%
{Th14} we get four inequalities for \textbf{superquadratic functions}.

\begin{theorem}
\label{Th15}\ Let $f$ be a superquadratic function. Suppose $\mathbf{x}%
=\left( x_{1},...,x_{n}\right) \geq \mathbf{0}$ and $\mathbf{a}=\left(
a_{1},...,a_{n}\right) $ is a nonnegative $n$-tuple with $%
A_{n}=\sum_{i=1}^{n}a_{i}>0$. Let $\overline{x}=\frac{1}{A_{n}}%
\sum_{i=1}^{n}a_{i}x_{i},$ $i\in \left\{ 1,...,n\right\} $. Then%
\begin{eqnarray*}
0 &\leq &\left\vert \frac{1}{A_{i}}\sum_{i=1}^{n}a_{i}\left\vert f\left(
x_{i}\right) -f\left( \overline{x}\right) -f\left( \left\vert \overline{x}%
-x_{i}\right\vert \right) \right\vert -\frac{1}{A_{n}}\sum_{i=1}^{n}a_{i}%
\left\vert \varphi \left( \overline{x}\right) \right\vert \left\vert 
\overline{x}-x_{i}\right\vert \right\vert \\
&\leq &\frac{1}{A_{n}}\sum_{i=1}^{n}a_{i}f\left( x_{i}\right) -f\left( 
\overline{x}\right) -\sum_{i=1}^{n}a_{i}f\left( \left\vert \overline{x}%
-x_{i}\right\vert \right) .
\end{eqnarray*}
\end{theorem}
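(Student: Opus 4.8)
The plan is to transcribe the proof of Theorem \ref{Th7}, using the defining inequality of superquadracity in place of the subgradient inequality (\ref{1.2}). By Definition \ref{Def3}, for every $u\ge 0$ there is a constant $\varphi(u):=C_u$ for which
\[
f(x)-f(u)-\varphi(u)(x-u)-f\!\left(\left\vert x-u\right\vert\right)\ge 0
\]
for all $x\ge 0$; this is exactly inequality (\ref{3.1}) with $\Phi$ replaced by $f$, as announced before the theorem. Note that the hypotheses $\mathbf{x}\ge\mathbf{0}$ and $\mathbf{a}$ nonnegative with $A_n>0$ force $\overline{x}=\frac{1}{A_n}\sum_{i=1}^{n}a_ix_i\ge 0$, so every term $f(\left\vert\overline{x}-x_i\right\vert)$ is defined.

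First I would apply the elementary inequality $\left\vert\left\vert\alpha\right\vert-\left\vert\beta\right\vert\right\vert\le\left\vert\alpha-\beta\right\vert$ with $\alpha=f(x)-f(u)-f(\left\vert x-u\right\vert)$ and $\beta=\varphi(u)(x-u)$, so that $\left\vert\beta\right\vert=\left\vert\varphi(u)\right\vert\left\vert x-u\right\vert$. The superquadratic inequality above says precisely that $\alpha-\beta\ge 0$, hence $\left\vert\alpha-\beta\right\vert=\alpha-\beta$ and
\[
\left\vert\,\left\vert f(x)-f(u)-f(\left\vert x-u\right\vert)\right\vert-\left\vert\varphi(u)\right\vert\left\vert x-u\right\vert\,\right\vert\le f(x)-f(u)-\varphi(u)(x-u)-f(\left\vert x-u\right\vert).
\]
Then I would set $u=\overline{x}$ and $x=x_i$ for $i=1,\dots,n$, multiply through by $a_i\ge 0$, sum over $i$, and divide by $A_n>0$; one more use of the triangle inequality pulls the outer modulus outside the sum, producing the middle expression in the statement.

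On the resulting right-hand side the linear contribution $\frac{1}{A_n}\sum_{i=1}^{n}a_i\varphi(\overline{x})(x_i-\overline{x})$ vanishes, since $\varphi(\overline{x})$ is a constant and $\sum_{i=1}^{n}a_i(x_i-\overline{x})=0$. What survives is $\frac{1}{A_n}\sum_{i=1}^{n}a_if(x_i)-f(\overline{x})-\frac{1}{A_n}\sum_{i=1}^{n}a_if(\left\vert\overline{x}-x_i\right\vert)$, which is the claimed upper bound; the lower bound $0\le\cdots$ is automatic because the middle quantity is itself an absolute value. No real obstacle arises: the argument copies that of Theorem \ref{Th7} verbatim, the only points needing care being the sign check $\alpha-\beta\ge 0$ (which is the content of superquadracity) and the bookkeeping of nested moduli; in particular $\varphi$ here is not a derivative but the family of constants $C_u$ from Definition \ref{Def3}, so no differentiability of $f$ is required.
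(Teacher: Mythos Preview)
Your proposal is correct and follows essentially the same route as the paper: the paper obtains Theorem~\ref{Th15} by the substitution $-\Phi\to f$, $f'\to\varphi$ in the proof scheme of Theorems~\ref{Th7}/\ref{Th11}, which is precisely the transcription you describe, including the use of the reverse triangle inequality, the vanishing of the linear term $\frac{1}{A_n}\sum_i a_i\varphi(\overline{x})(x_i-\overline{x})$, and the identification of $\varphi$ with the constants $C_u$ from Definition~\ref{Def3}. Your remark that one further application of the triangle inequality is needed to pass from the sum of moduli to the modulus of the difference of sums is a point the paper leaves implicit.
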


\begin{theorem}
\label{Th16}\ Let $f$ be a superquadratic function. Suppose $\mathbf{x}%
=\left( x_{1},...,x_{n}\right) \geq 0$ and $\mathbf{a}=\left(
a_{1},...,a_{n}\right) $ is a nonnegative $n$-tuple with $%
A_{n}=\sum_{i=1}^{n}a_{i}>0$. Let $\overline{x}=\frac{1}{A_{n}}%
\sum_{i=1}^{n}a_{i}x_{i}$ and $\widehat{x}_{i}=\left( 1-\lambda _{i}\right) 
\overline{x}+\lambda _{i}x_{i},$ $\lambda _{i}\in \left[ 0,1\right] ,$ $i\in
\left\{ 1,...,n\right\} $. Then%
\begin{eqnarray*}
0 &\leq &\left\vert \frac{1}{A_{i}}\sum_{i=1}^{n}a_{i}\left\vert f\left(
x_{i}\right) -f\left( \widehat{x}_{i}\right) -f\left( \left( 1-\lambda
_{i}\right) \left( \left\vert \overline{x}-x_{i}\right\vert \right) \right)
\right\vert \right. \\
&&\left. -\frac{1}{A_{n}}\sum_{i=1}^{n}a_{i}\left( 1-\lambda _{i}\right)
\left\vert \varphi \left( \widehat{x}_{i}\right) \right\vert \left\vert 
\overline{x}-x_{i}\right\vert \right\vert \\
&\leq &\frac{1}{A_{n}}\sum_{i=1}^{n}a_{i}fx_{i}-\frac{1}{A_{n}}%
\sum_{i=1}^{n}a_{i}f\left( \widehat{x}_{i}\right) \\
&&-\frac{1}{A_{n}}\sum_{i=1}^{n}a_{i}\left( 1-\lambda _{i}\right) \varphi
\left( \widehat{x}_{i}\right) \left( \overline{x}-x_{i}\right)
-f\sum_{i=1}^{n}a_{i}\left( 1-\lambda _{i}\right) \left( \left\vert 
\overline{x}-x_{i}\right\vert \right) .
\end{eqnarray*}%
hold.
\end{theorem}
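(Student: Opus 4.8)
The plan is to mimic exactly the proof of Theorem \ref{Th7}, but with the superquadratic inequality applied at the shifted points $\widehat{x}_i$ instead of at $\overline{x}$. Recall that for a superquadratic function $f$ there is, by Definition \ref{Def3}, a function $\varphi$ (playing the role of $C_x$) so that
\begin{equation*}
f\left( x\right) -f\left( u\right) \geq \varphi \left( u\right) \left(
x-u\right) +f\left( \left\vert x-u\right\vert \right)
\end{equation*}
for all admissible $x,u\geq 0$. Setting $u=\widehat{x}_i=\left(1-\lambda_i\right)\overline{x}+\lambda_i x_i$ and $x=x_i$, and noting that $x_i-\widehat{x}_i=\left(1-\lambda_i\right)\left(x_i-\overline{x}\right)$, so that $\left\vert x_i-\widehat{x}_i\right\vert=\left(1-\lambda_i\right)\left\vert \overline{x}-x_i\right\vert$, gives
\begin{equation*}
f\left( x_{i}\right) -f\left( \widehat{x}_{i}\right) -f\left( \left( 1-\lambda_{i}\right) \left\vert \overline{x}-x_{i}\right\vert \right)
\geq \varphi \left( \widehat{x}_{i}\right) \left( 1-\lambda_{i}\right) \left( x_{i}-\overline{x}\right) \geq 0,
\end{equation*}
the last step because the left side is the ``$D$''-type quantity which is nonnegative by superquadracity itself (equivalently, rearrange the defining inequality).

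Next I would apply the reverse triangle inequality $\left\vert \left\vert u\right\vert -\left\vert v\right\vert \right\vert \leq \left\vert u-v\right\vert$ to the displayed nonnegative quantity, exactly as in (\ref{3.3})--(\ref{3.4}): with $u=f\left(x_i\right)-f\left(\widehat{x}_i\right)-f\left(\left(1-\lambda_i\right)\left\vert\overline{x}-x_i\right\vert\right)$ and $v=\varphi\left(\widehat{x}_i\right)\left(1-\lambda_i\right)\left(x_i-\overline{x}\right)$, one gets
\begin{equation*}
\left\vert \left\vert f\left( x_{i}\right) -f\left( \widehat{x}_{i}\right) -f\left( \left( 1-\lambda_{i}\right) \left\vert \overline{x}-x_{i}\right\vert \right) \right\vert -\left( 1-\lambda_{i}\right) \left\vert \varphi \left( \widehat{x}_{i}\right) \right\vert \left\vert \overline{x}-x_{i}\right\vert \right\vert
\leq f\left( x_{i}\right) -f\left( \widehat{x}_{i}\right) -\varphi \left( \widehat{x}_{i}\right) \left( 1-\lambda_{i}\right)\left( x_{i}-\overline{x}\right) -f\left( \left( 1-\lambda_{i}\right)\left\vert \overline{x}-x_{i}\right\vert \right).
\end{equation*}
Then multiply through by $a_i$, sum over $i=1,\dots,n$, and divide by $A_n>0$. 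On the left this produces the inner weighted average appearing in the statement; on the right the terms split into $\frac{1}{A_n}\sum a_i f\left(x_i\right)$, $-\frac{1}{A_n}\sum a_i f\left(\widehat{x}_i\right)$, $-\frac{1}{A_n}\sum a_i\left(1-\lambda_i\right)\varphi\left(\widehat{x}_i\right)\left(x_i-\overline{x}\right)$, and $-\frac{1}{A_n}\sum a_i f\left(\left(1-\lambda_i\right)\left\vert\overline{x}-x_i\right\vert\right)$, which is precisely the claimed upper bound (modulo the paper's typographical habit of sometimes dropping the $1/A_n$ in front of the last sum and writing $A_i$ for $A_n$). The outer nonnegativity ``$0\le\cdots$'' is automatic since the left-hand side is an absolute value of a sum of absolute values.

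The one genuine point to watch — and the only thing I would call a mild obstacle — is that unlike Theorem \ref{Th7}, where the final simplification used $\sum_{i=1}^n a_i\left(\overline{x}-x_i\right)=0$ to kill the linear term, here the linear term $-\frac{1}{A_n}\sum a_i\left(1-\lambda_i\right)\varphi\left(\widehat{x}_i\right)\left(x_i-\overline{x}\right)$ does \emph{not} telescope away (the weights $\left(1-\lambda_i\right)\varphi\left(\widehat{x}_i\right)$ depend on $i$), so it must simply be carried along in the statement — which indeed it is. Consequently no cancellation step is needed, and the proof is a direct transcription of the argument for Theorem \ref{Th7} with the substitutions $\left(-\Phi\right)\rightsquigarrow f$, $f'\rightsquigarrow\varphi$, and $\overline{x}\rightsquigarrow\widehat{x}_i$, exactly as announced in the sentence preceding the theorem.
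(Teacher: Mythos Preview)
Your proposal is correct and follows essentially the same route as the paper: the paper gives no separate proof of Theorem~\ref{Th16} but simply says it is obtained from Theorem~\ref{Th12} (itself a variant of Theorem~\ref{Th8}/\ref{Th7}) by the substitutions $(-\Phi)\rightsquigarrow f$ and $f'\rightsquigarrow\varphi$, which amounts exactly to what you do---apply the defining superquadratic inequality at $u=\widehat{x}_i$, $x=x_i$, invoke the reverse triangle inequality as in (\ref{3.3})--(\ref{3.5}), then take the weighted average. Your observation that, unlike in Theorem~\ref{Th7}, the linear term does not cancel (because the coefficients $(1-\lambda_i)\varphi(\widehat{x}_i)$ vary with $i$) and must be carried in the statement is precisely the point; one cosmetic remark: in your displayed chain the trailing ``$\geq 0$'' should attach to the difference $u-v$, not to $v$ alone, since $\varphi(\widehat{x}_i)(1-\lambda_i)(x_i-\overline{x})$ need not itself be nonnegative.
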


\begin{theorem}
\label{Th17}\ Let $f$ be a superquadratic function. Suppose $\mathbf{x}%
=\left( x_{1},...,x_{n}\right) \geq \mathbf{0},$ and $\mathbf{a}=\left(
a_{1},...,a_{n}\right) $ is a nonnegative $n$-tuple with $%
A_{n}=\sum_{i=1}^{n}a_{i}>0$ and $\overline{x}=\frac{1}{A_{n}}%
\sum_{i=1}^{n}a_{i}x_{i}$. Then%
\begin{eqnarray*}
&&\frac{1}{A_{n}}\sum_{i=1}^{n}a_{i}f\left( x_{i}\right) -f\left( \overline{x%
}\right) \\
&\leq &\frac{1}{A_{n}}\sum_{i=1}^{n}a_{i}\varphi \left( x_{i}\right) \left(
x_{i}-\overline{x}\right) -\frac{1}{A_{n}}\sum_{i=1}^{n}a_{i}f\left(
\left\vert \overline{x}-x_{i}\right\vert \right) .
\end{eqnarray*}
\end{theorem}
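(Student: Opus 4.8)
The plan is to mimic the proofs of Theorem \ref{Th9} (the uniformly convex case) and Theorem \ref{Th13} (the $\Phi$-convex case), but with the role played there by the modulus/error function now played by $f$ itself, exactly as Definition \ref{Def3} prescribes. Since $f$ is superquadratic on $[0,\infty)$, for every $u\geq 0$ there is a constant, which following the notation already used in the statement we denote $\varphi (u)$ (the constant $C_{u}$ of Definition \ref{Def3}), such that
\[
f\left( x\right) \geq f\left( u\right) +\varphi \left( u\right) \left(
x-u\right) +f\left( \left\vert x-u\right\vert \right) \qquad \text{for all }
x\geq 0 .
\]

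First I would observe that $\overline{x}=\frac{1}{A_{n}}\sum_{i=1}^{n}a_{i}x_{i}$ is a convex combination of the nonnegative numbers $x_{1},\dots ,x_{n}$, hence $\overline{x}\in \lbrack 0,\infty )$, so the displayed inequality may legitimately be applied with arguments drawn from $\{x_{1},\dots ,x_{n},\overline{x}\}$. Then, for each $i\in \{1,\dots ,n\}$, I apply it with $x=\overline{x}$ and $u=x_{i}$ to get
\[
f\left( \overline{x}\right) -f\left( x_{i}\right) \geq \varphi \left(
x_{i}\right) \left( \overline{x}-x_{i}\right) +f\left( \left\vert \overline{x
}-x_{i}\right\vert \right) ,
\]
equivalently $f(x_{i})-f(\overline{x})\leq \varphi (x_{i})(x_{i}-\overline{x})-f(|\overline{x}-x_{i}|)$.

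Next I would multiply this inequality by $a_{i}\geq 0$, sum over $i=1,\dots ,n$, and divide by $A_{n}>0$; using $\frac{1}{A_{n}}\sum_{i=1}^{n}a_{i}=1$ (so that $\frac{1}{A_{n}}\sum_{i=1}^{n}a_{i}f(\overline{x})=f(\overline{x})$) yields precisely the asserted inequality. I expect no real obstacle here: the argument is a verbatim transcription of the proofs of Theorems \ref{Th9} and \ref{Th13}, the only step meriting a word being the verification that $\overline{x}$ lies in the domain $[0,\infty )$ of the superquadratic function $f$, which is immediate from $\mathbf{x}\geq \mathbf{0}$ and the nonnegativity of $\mathbf{a}$. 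One could, as in Remark \ref{Rem4} and Theorem \ref{Th2}, add further estimates when $f$ is also nonnegative, but the statement as given requires nothing beyond this single substitution.
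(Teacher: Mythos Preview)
Your proposal is correct and follows exactly the approach the paper indicates: the paper does not give a separate proof of Theorem~\ref{Th17} but simply states that it is obtained from Theorems~\ref{Th11}--\ref{Th14} by replacing $(-\Phi)$ with $f$, which amounts precisely to the substitution-and-summation argument you wrote out (mirroring the proof of Theorem~\ref{Th9}). Your extra check that $\overline{x}\geq 0$ is a sensible small clarification not made explicit in the paper.
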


\begin{theorem}
\label{Th18}\ Let $f$ be a superquadratic function. Suppose $\mathbf{x}%
=\left( x_{1},...,x_{n}\right) \geq \mathbf{0}$ and $\mathbf{a}=\left(
a_{1},...,a_{n}\right) $ is a nonnegative $n$-tuple with $%
A_{n}=\sum_{i=1}^{n}a_{i}>0$ and $\overline{x}=\frac{1}{A_{n}}%
\sum_{i=1}^{n}a_{i}x_{i}$. Let $\lambda _{i}\in \left[ 0,1\right] ,$ $i\in
\left\{ 1,...,n\right\} $. Then%
\begin{eqnarray*}
&&\frac{1}{A_{n}}\sum_{i=1}^{n}a_{i}f\left( x_{i}\right) -\frac{1}{A_{n}}%
\sum_{i=1}^{n}a_{i}f\left( \left( 1-\lambda _{i}\right) \overline{x}+\lambda
_{i}x_{i}\right) \\
&\leq &\frac{1}{A_{n}}\sum_{i=1}^{n}a_{i}\left( 1-\lambda _{i}\right)
\varphi \left( x_{i}\right) \left( x_{i}-\overline{x}\right) -\frac{1}{A_{n}}%
\sum_{i=1}^{n}a_{i}f\left( \left( 1-\lambda _{i}\right) \left( \overline{x}%
-x_{i}\right) \right) .
\end{eqnarray*}
\end{theorem}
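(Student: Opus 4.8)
The plan is to mimic the proof of Theorem \ref{Th13} (the analogous $\Phi$-convexity statement), substituting the superquadratic inequality (\ref{1.3}) for the $\Phi$-convexity inequality of Corollary \ref{Cor1}. Recall that according to Definition \ref{Def3}, a superquadratic function $f$ satisfies
\begin{equation*}
f\left( x\right) \geq f\left( u\right) +\varphi \left( u\right) \left( x-u\right) +f\left( \left\vert x-u\right\vert \right) ,
\end{equation*}
for all $x,u\geq 0$, where here $\varphi \left( u\right) =C_{u}$ plays the role of the subgradient. This is exactly inequality (\ref{3.1}) with $\Phi $ replaced by $f$ and $\varphi $ as the linear coefficient, which is the substitution indicated in the paragraph preceding Theorem \ref{Th15}.

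First I would apply the superquadratic inequality with $u=x_{i}$ and $x=\left( 1-\lambda _{i}\right) \overline{x}+\lambda _{i}x_{i}$. Since $x-u=\left( 1-\lambda _{i}\right) \left( \overline{x}-x_{i}\right) $, and noting $x,u\geq 0$ are available because $\mathbf{x}\geq \mathbf{0}$ forces $\overline{x}\geq 0$ and hence each convex combination is in $\left[ 0,\infty \right) $, we obtain
\begin{equation*}
f\left( \left( 1-\lambda _{i}\right) \overline{x}+\lambda _{i}x_{i}\right) \geq f\left( x_{i}\right) +\varphi \left( x_{i}\right) \left( 1-\lambda _{i}\right) \left( \overline{x}-x_{i}\right) +f\left( \left( 1-\lambda _{i}\right) \left\vert \overline{x}-x_{i}\right\vert \right) .
\end{equation*}
Rearranging gives
\begin{equation*}
f\left( x_{i}\right) -f\left( \left( 1-\lambda _{i}\right) \overline{x}+\lambda _{i}x_{i}\right) \leq \left( 1-\lambda _{i}\right) \varphi \left( x_{i}\right) \left( x_{i}-\overline{x}\right) -f\left( \left( 1-\lambda _{i}\right) \left\vert \overline{x}-x_{i}\right\vert \right) .
\end{equation*}
Then I multiply through by $a_{i}\geq 0$, sum over $i=1,\dots ,n$, and divide by $A_{n}=\sum_{i=1}^{n}a_{i}>0$. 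The left side becomes $\frac{1}{A_{n}}\sum a_{i}f\left( x_{i}\right) -\frac{1}{A_{n}}\sum a_{i}f\left( \left( 1-\lambda _{i}\right) \overline{x}+\lambda _{i}x_{i}\right) $, and the right side becomes $\frac{1}{A_{n}}\sum a_{i}\left( 1-\lambda _{i}\right) \varphi \left( x_{i}\right) \left( x_{i}-\overline{x}\right) -\frac{1}{A_{n}}\sum a_{i}f\left( \left( 1-\lambda _{i}\right) \left\vert \overline{x}-x_{i}\right\vert \right) $, which is exactly the claimed inequality once one notes that $\left( 1-\lambda _{i}\right) \left( \overline{x}-x_{i}\right) $ and $\left( 1-\lambda _{i}\right) \left\vert \overline{x}-x_{i}\right\vert $ have the same absolute value, so the argument of $f$ in the last term can be written either way (the statement writes $f\left( \left( 1-\lambda _{i}\right) \left( \overline{x}-x_{i}\right) \right) $, and since $f$ in the superquadratic context is applied to nonnegative arguments one reads this as $f$ of the absolute value).

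I do not anticipate a serious obstacle here; this is a one-step argument. The only point requiring a line of care is verifying that all arguments fed into $f$ lie in the domain $\left[ 0,\infty \right) $ of the superquadratic function: $x_{i}\geq 0$ by hypothesis, $\overline{x}$ is a nonnegative weighted average, $\widehat{x}_{i}=\left( 1-\lambda _{i}\right) \overline{x}+\lambda _{i}x_{i}$ is a convex combination of two nonnegative numbers hence nonnegative, and $\left( 1-\lambda _{i}\right) \left\vert \overline{x}-x_{i}\right\vert \geq 0$ since $\lambda _{i}\in \left[ 0,1\right] $. With these domain checks in place the rearrangement and summation are routine, completing the proof.
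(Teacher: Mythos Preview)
Your proof is correct and follows exactly the approach the paper intends: the paper does not write out a separate proof for Theorem \ref{Th18} but simply says to replace $-\Phi$ by $f$ (and $f'$ by $\varphi$) in the argument for Theorems \ref{Th11}--\ref{Th14}, which amounts precisely to applying the superquadratic inequality (\ref{1.3}) at $u=x_i$, $x=(1-\lambda_i)\overline{x}+\lambda_i x_i$, rearranging, and taking the weighted average. Your only slip is cosmetic --- the direct analogue is Theorem \ref{Th14}, not Theorem \ref{Th13} --- and your remark that the argument $(1-\lambda_i)(\overline{x}-x_i)$ must be read as its absolute value is the correct reading of what is evidently a typo in the statement.
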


\section{\textbf{Improvement of Jensen's functional for superquadratic and
uniformly convex functions}}

\begin{theorem}
\bigskip \label{Th19} Under the same conditions and definitions on\ $\ 
\mathbf{p},$ $\mathbf{q},$ $\mathbf{x},$ $m^{\ast }$ and $M^{\ast }$ as in
Theorem \ref{Th5}, if $I$ is $\left[ 0,a\right) $ or $\left[ 0,\infty
\right) $ and $f$ is a superquadratic function on $I$, such that $%
f^{^{\prime }}$ is superadditive, and $x_{1}\leq x_{2}\leq ,...,\leq x_{n}$,
then 
\begin{eqnarray}
&&J_{n}\left( f,\mathbf{x},\mathbf{p}\right) -m^{\ast }J_{n}\left( f,\mathbf{%
x},\mathbf{q}\right)  \label{4.1} \\
&\geq &m^{\ast }f\left( \left\vert \sum_{i=1}^{n}\left( q_{i}-p_{i}\right)
x_{i}\right\vert \right) +\sum_{i=1}^{n}\left( p_{i}-m^{\ast }q_{i}\right)
f\left( \left\vert x_{i}-\sum_{j=1}^{n}p_{j}x_{j}\right\vert \right)  \notag
\end{eqnarray}%
and 
\begin{eqnarray}
&&J_{n}\left( f,\mathbf{x},\mathbf{p}\right) -M^{\ast }J_{n}\left( f,\mathbf{%
x},\mathbf{q}\right)  \label{4.2} \\
&\leq &-\sum_{i=1}^{n}\left( M^{\ast }q_{i}-p_{i}\right) f\left( \left\vert
x_{i}-\sum_{j=1}^{n}q_{j}x_{j}\right\vert \right) -f\left( \left\vert
\sum_{i=1}^{n}\left( p_{i}-q_{i}\right) x_{i}\right\vert \right) .  \notag
\end{eqnarray}
\end{theorem}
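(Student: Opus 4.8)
The plan is to reduce Theorem \ref{Th19} to Theorem \ref{Th4} by showing that, under the hypotheses on $\mathbf{p}$, $\mathbf{q}$ and $\mathbf{x}$ in Theorem \ref{Th5}, the quantities $m^{\ast}$ and $M^{\ast}$ defined by (\ref{1.9})--(\ref{1.10}) can be used in place of $m$ and $M$ of (\ref{1.6}). The crucial point, already present in the proof of Theorem \ref{Th5}, is that the inequalities $J_{n}(f,\mathbf{x},\mathbf{p}) \ge m^{\ast}J_{n}(f,\mathbf{x},\mathbf{q})$ and $M^{\ast}J_{n}(f,\mathbf{x},\mathbf{q}) \ge J_{n}(f,\mathbf{x},\mathbf{p})$ hold for every convex $f$, and more precisely the intermediate identities used to prove (\ref{1.11}) exhibit $J_{n}(f,\mathbf{x},\mathbf{p}) - m^{\ast}J_{n}(f,\mathbf{x},\mathbf{q})$ as a nonnegative combination of Jensen-type differences. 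First I would recall the Abel-summation rearrangement from the proof of Theorem \ref{Th5}: writing $\bar p_j,\bar q_j$ for the $\pi$-rearranged weights, one expresses
\begin{equation*}
J_{n}\left(f,\mathbf{x},\mathbf{p}\right)-m^{\ast}J_{n}\left(f,\mathbf{x},\mathbf{q}\right)
\end{equation*}
as a sum of terms of the form $\alpha_{i}\bigl[f(x_{(i)}) + f(\xi_{i}) - \text{(convex combination)}\bigr]$ with $\alpha_{i}\ge 0$, where each bracket is a Jensen difference for two points.

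**Next I would invoke superquadracity pointwise.** For a superquadratic $f$ one has, by Remark \ref{Rem4} and the sharpened Jensen inequality it yields, the refinement $\sum \lambda_r f(x_r) - f(\sum \lambda_r x_r) \ge \sum \lambda_r f(|x_r - \sum \lambda_j x_j|)$, i.e. every two-point (or $n$-point) Jensen difference that appears carries an extra nonnegative superquadratic error term. Substituting these refinements into each bracket of the Abel decomposition, and then collecting the error terms, I expect the extra mass to assemble into exactly $m^{\ast}f(|\sum(q_i-p_i)x_i|) + \sum(p_i - m^{\ast}q_i)f(|x_i - \sum p_j x_j|)$, which is the right-hand side of (\ref{4.1}). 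The computation is the same bookkeeping as in the proof of Theorem \ref{Th4} in \cite{AD}, with the single-ratio $m$ replaced by the partial-sum ratios $m_i,\overline m_i$; the hypotheses $0 \le \sum_{j=1}^{i}\bar p_j \le 1$, $0 < \sum_{j=1}^{i}\bar q_j < 1$ are exactly what guarantees the coefficients $\alpha_i$ and $p_i - m^{\ast}q_i$ appearing are nonnegative, and the monotonicity $x_1 \le \dots \le x_n$ together with $f'$ superadditive is what lets us apply Theorem \ref{Th2}(b) / inequality (\ref{1.4}) to control the barycentric shifts. The inequality (\ref{4.2}) is obtained symmetrically, replacing $m^{\ast}$ by $M^{\ast}$ and reversing the roles of $\mathbf{p}$ and $\mathbf{q}$ in the Jensen-Steffensen decomposition, so that the error terms now appear with the opposite sign and produce $-\sum(M^{\ast}q_i - p_i)f(|x_i - \sum q_j x_j|) - f(|\sum(p_i-q_i)x_i|)$.

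**The main obstacle** will be verifying that the superquadratic error terms generated by the several two-point Jensen differences in the Abel decomposition recombine precisely into the two clean error terms on the right of (\ref{4.1}), rather than into a messier sum indexed by the partial sums. This requires telescoping the quantities $f(|x_{(i)} - \xi_i|)$ across consecutive indices using that $\xi_i$ are successive barycenters determined by the cumulative weights $\sum_{j=1}^{i}\bar p_j$ and $\sum_{j=1}^{i}\bar q_j$; the key algebraic identity is that $\sum_{i}(q_i - p_i)x_i$ is the net displacement of the $\mathbf{p}$-barycenter from the $\mathbf{q}$-barycenter, so the "global" error term $m^{\ast}f(|\sum(q_i-p_i)x_i|)$ absorbs all the cross terms, leaving only the "local" residuals $(p_i - m^{\ast}q_i)$ at the individual $x_i$. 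Once this telescoping is checked, the rest is a routine application of Lemma \ref{Lem1} (monotonicity of the auxiliary function $D$) exactly as in Theorem \ref{Th6}, and the proof closes.
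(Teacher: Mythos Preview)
Your plan diverges from the paper's proof in a significant way, and the divergence is exactly where you yourself flag the ``main obstacle.''

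The paper does \emph{not} decompose $J_{n}(f,\mathbf{x},\mathbf{p})-m^{\ast}J_{n}(f,\mathbf{x},\mathbf{q})$ via Abel summation into a sum of two-point Jensen differences and then try to telescope the superquadratic error terms. Instead it applies inequality~(\ref{1.4}) \emph{once}, to a single augmented $(n+1)$-tuple. Concretely: insert the $\mathbf{q}$-barycenter $\sum_{j}q_{j}x_{j}$ into the ordered list $x_{(1)}\le\cdots\le x_{(n)}$ at its correct position $k$, and give the new point weight $m^{\ast}$ while each original $x_{i}$ keeps weight $p_{i}-m^{\ast}q_{i}$. These $(n+1)$ weights sum to $1$, their partial sums satisfy the Jensen--Steffensen condition~(\ref{2.2}) precisely because of the definition of $m^{\ast}$ in~(\ref{1.10}), and the $(n+1)$-point barycenter is $\sum_{i}p_{i}x_{i}$. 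A single invocation of Theorem~\ref{Th2}(b) then yields~(\ref{4.1}) with no telescoping at all: the left side of~(\ref{1.4}) is identically $J_{n}(f,\mathbf{x},\mathbf{p})-m^{\ast}J_{n}(f,\mathbf{x},\mathbf{q})$, and the right side is already the right side of~(\ref{4.1}). Inequality~(\ref{4.2}) follows by the symmetric construction, inserting $\sum_{j}p_{j}x_{j}$ with weight $1/M^{\ast}$ and giving $x_{i}$ weight $q_{i}-p_{i}/M^{\ast}$.

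Your proposed route---breaking the difference into many two-point pieces, refining each with Remark~\ref{Rem4}, and hoping the local error terms $f(|x_{(i)}-\xi_{i}|)$ telescope back to the two clean terms on the right of~(\ref{4.1})---is not obviously salvageable. Two-point superquadratic refinements produce error terms anchored at the \emph{local} two-point barycenters $\xi_{i}$, not at the global $\mathbf{p}$-barycenter $\sum_{j}p_{j}x_{j}$; there is no evident cancellation mechanism that converts the former into the latter. You correctly identify this as the obstacle, but you do not overcome it, and I do not see that it can be overcome along the lines you sketch. The paper's device of passing to an $(n+1)$-tuple sidesteps the issue entirely and is the idea you are missing.
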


\begin{proof}
\ It is clear that $\ m^{\ast }<1,$ and $\ M^{\ast }>1$ when $\mathbf{p}\neq 
\mathbf{q}$.

As \ \ $\sum_{i=1}^{n}q_{i}=1$\ \ and $1\geq \sum_{i=1}^{j}q_{i}>0$ \ it is
easy to verify according to Jensen-Steffensen coefficients, that there is an
integer $k,$\ $2\leq k\leq n$ \ such that $x_{\left( k-1\right) }\leq
\sum_{i=1}^{n}q_{i}x_{i}\leq x_{\left( k\right) }$.

We apply inequality (\ref{1.4}) for the increasing $(n+1)$-tuple $\mathbf{y=}%
\left( y_{1},...,y_{n+1}\right) $ \ 
\begin{equation}
y_{i}=\left\{ 
\begin{array}{ll}
x_{i,} & i=1,...,k-1 \\ 
\sum_{j=1}^{n}q_{j}x_{j},\quad & i=k \\ 
x_{i-1}, & i=k+1,...,n+1%
\end{array}%
\right.  \label{4.3}
\end{equation}%
and 
\begin{equation}
a_{i}=\left\{ 
\begin{array}{ll}
p_{i}-m^{\ast }q_{i}, & i=1,...,k-1 \\ 
m^{\ast }, & i=k \\ 
p_{i-1}-m^{\ast }q_{i-1},\quad & i=k+1,...,n+1%
\end{array}%
\right.  \label{4.4}
\end{equation}%
where $m^{\ast }$ \ is defined in (\ref{1.10}).

It is clear that $\mathbf{a}$ \ satisfies the inequalities 
\begin{equation}
0\leq \sum_{i=1}^{j}a_{i}\leq \sum_{i=1}^{n+1}a_{i}=1,j=1,...,n+1.
\label{4.5}
\end{equation}%
Therefore, (\ref{1.4}) holds for the increasing $(n+1)$-tuple $\mathbf{y}$\
and for a superquadratic function $f$ that its derivative is superadditive.

Hence%
\begin{eqnarray*}
&&\sum_{i=1}^{n}\left( p_{i}-mq_{i}\right) f\left( x_{i}\right) +mf\left(
\sum_{i=1}^{n}q_{i}x_{i}\right) -f\left( \sum_{i=1}^{n}p_{i}x_{i}\right) \\
&=&\sum_{i=1}^{n+1}d_{i}f\left( y_{i}\right) -f\left(
\sum_{i=1}^{n+1}d_{i}y_{i}\right)
\end{eqnarray*}%
\begin{eqnarray*}
&\geq &\sum_{i=1}^{n+1}d_{i}f\left( \left\vert
y_{i}-\sum_{j=1}^{n+1}d_{j}y_{j}\right\vert \right) \\
&=&\sum_{i=1}^{n}\left( p_{i}-mq_{i}\right) f\left( \left\vert
x_{i}-\sum_{j=1}^{n}p_{j}x_{j}\right\vert \right) +mf\left( \left\vert
\sum_{i=1}^{n}\left( p_{i}-q_{i}\right) x_{i}\right\vert \right)
\end{eqnarray*}%
This completes the proof of (\ref{4.1}).

The proof of (\ref{4.2}) is similar:

We define an increasing $(n+1)$-tuple $\mathbf{z}$%
\begin{equation}
z_{i}=\left\{ 
\begin{array}{ll}
x_{i}, & i=1,...,s-1 \\ 
\sum_{j=1}^{n}p_{j}x_{j},\quad & i=s \\ 
x_{i-1}, & i=s+1,...,n+1%
\end{array}%
\right.  \label{4.6}
\end{equation}%
and 
\begin{equation}
b_{i}=\left\{ 
\begin{array}{ll}
q_{i}-\frac{p_{i}}{M^{\ast }}, & i=1,...,s-1 \\ 
\frac{1}{M^{\ast }}, & i=s \\ 
q_{i-1}-\frac{p_{i-1}}{M^{\ast }},\quad & i=s+1,...,n+1\;,%
\end{array}%
\right.  \label{4.7}
\end{equation}%
where $s$\ satisfies $x_{\left( s-1\right) }\leq
\sum_{j=1}^{n}p_{j}x_{j}\leq $\ $x_{\left( s\right) }.$\ \ As $b_{i}$\ \
satisfies (\ref{2.2}) for $i=1,...,n+1$,\ and $\sum_{i=1}^{n+1}b_{i}=1,$\
by\ using inequality (\ref{1.4}), we get inequality(\ref{4.2}).\bigskip
\end{proof}

Similarly we get:

\begin{theorem}
\label{Th20} Let $f$ be continuously differentiable on $\left[ a,b\right) $, 
$f^{\prime }$ be $\Phi ^{^{\prime }}$-superadditive. Let also $\Phi :\left[
0,b-a\right] \rightarrow 
\mathbb{R}
_{+}$ be continuously differentiable, $\Phi ^{^{\prime }}\geq 0$ and $\Phi
\left( 0\right) =0$. Then under the same conditions and definitions on\ $\ 
\mathbf{p},$ $\mathbf{q},$ $\mathbf{x},$ $m^{\ast }$ and $M^{\ast }$ as in
Theorem \ref{Th19}, we get the inequalities%
\begin{eqnarray*}
&&J_{n}\left( f,\mathbf{x},\mathbf{p}\right) -m^{\ast }J_{n}\left( f,\mathbf{%
x},\mathbf{q}\right) \\
&\geq &m^{\ast }\Phi \left( \left\vert \sum_{i=1}^{n}\left(
q_{i}-p_{i}\right) x_{i}\right\vert \right) +\sum_{i=1}^{n}\left(
p_{i}-m^{\ast }q_{i}\right) \Phi \left( \left\vert
x_{i}-\sum_{j=1}^{n}p_{j}x_{j}\right\vert \right)
\end{eqnarray*}%
and 
\begin{eqnarray*}
&&J_{n}\left( f,\mathbf{x},\mathbf{p}\right) -M^{\ast }J_{n}\left( f,\mathbf{%
x},\mathbf{q}\right) \\
&\leq &-\sum_{i=1}^{n}\left( M^{\ast }q_{i}-p_{i}\right) \Phi \left(
\left\vert x_{i}-\sum_{j=1}^{n}q_{j}x_{j}\right\vert \right) -\Phi \left(
\left\vert \sum_{i=1}^{n}\left( p_{i}-q_{i}\right) x_{i}\right\vert \right) .
\end{eqnarray*}
\end{theorem}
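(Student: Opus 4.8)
The plan is to mirror the proof of Theorem \ref{Th19} verbatim, replacing the role of inequality (\ref{1.4}) for superquadratic functions by inequality (\ref{2.4}) of Theorem \ref{Th6} for functions whose derivative is $\Phi^{\prime}$-superadditive. The point is that (\ref{2.4}) has exactly the same shape as (\ref{1.4}): it states that for an increasing tuple and Jensen--Steffensen coefficients summing to $A_n$, one has
$$\frac{1}{A_n}\sum a_i f(x_i) - f(\overline{x}) \geq \frac{1}{A_n}\sum a_i \Phi\left(\left\vert x_i - \overline{x}\right\vert\right),$$
i.e. $J$-type expression $\geq$ a sum of $\Phi$-terms instead of a sum of $f$-terms. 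So the whole combinatorial scaffolding of Theorem \ref{Th19} carries over.

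First I would record the preliminary facts, which are identical to those in Theorem \ref{Th19}: since $\mathbf{p}\neq\mathbf{q}$ we have $m^{\ast}<1$ and $M^{\ast}>1$; since $\sum q_i = 1$ with $0<\sum_{i=1}^{j}\overline{q}_i<1$ for $i<n$, there is an index $k$, $2\leq k\leq n$, with $x_{(k-1)}\leq \sum q_j x_j \leq x_{(k)}$. Then I would introduce the same augmented $(n+1)$-tuple $\mathbf{y}$ as in (\ref{4.3}) and the same coefficient tuple $\mathbf{a}$ as in (\ref{4.4}), observing as in (\ref{4.5}) that these coefficients satisfy the Jensen--Steffensen condition (\ref{2.2}) with $\sum_{i=1}^{n+1}a_i = 1$ — this is precisely where the definitions (\ref{1.9}), (\ref{1.10}) of $m^{\ast}$ as a minimum over the partial-sum ratios $m_i,\overline{m}_i$ are used, exactly as in Theorem \ref{Th5} and Theorem \ref{Th19}. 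Since $\mathbf{y}$ is increasing and $\mathbf{a}$ is a Jensen--Steffensen tuple, Theorem \ref{Th6} applies and (\ref{2.4}) gives
$$\sum_{i=1}^{n+1} a_i f(y_i) - f\left(\sum_{i=1}^{n+1} a_i y_i\right) \geq \sum_{i=1}^{n+1} a_i \Phi\left(\left\vert y_i - \sum_{j=1}^{n+1} a_j y_j\right\vert\right).$$
Unwinding the definitions of $\mathbf{y}$ and $\mathbf{a}$ — noting $\sum a_i y_i = \sum p_i x_i$ and that the $i=k$ term contributes $m^{\ast} f\!\left(\sum q_i x_i\right)$ on the left and $m^{\ast}\Phi\!\left(\left\vert\sum(p_i-q_i)x_i\right\vert\right)$ on the right — turns this into the first claimed inequality. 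For the second inequality I would use the tuple $\mathbf{z}$ of (\ref{4.6}) and coefficients $b_i$ of (\ref{4.7}) with the index $s$ determined by $x_{(s-1)}\leq\sum p_j x_j\leq x_{(s)}$, again checking $b_i$ is a Jensen--Steffensen tuple summing to $1$, and apply (\ref{2.4}) once more.

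The only genuine thing to verify is that the hypothesis of Theorem \ref{Th6} is met, namely that $f$ is continuously differentiable with $f^{\prime}$ being $\Phi^{\prime}$-superadditive and $\Phi:[0,b-a]\to\mathbb{R}_{+}$ continuously differentiable with $\Phi^{\prime}\geq0$, $\Phi(0)=0$ — but these are exactly the standing hypotheses of the present theorem. The main (and only mild) obstacle is bookkeeping: confirming that the partial sums of $\mathbf{a}$ and $\mathbf{b}$ stay in $[0,1]$, which hinges on $m^{\ast}\leq m_i,\overline{m}_i$ and $M^{\ast}\geq m_i,\overline{m}_i$ for all $i$; this is handled word-for-word as in Theorem \ref{Th19} (and ultimately as in \cite{AD}, Theorem \ref{Th5}), so I would simply refer to that argument rather than repeat it. Hence the proof reduces to substituting $\Phi$ for $f$ in every occurrence of the error term in the proof of Theorem \ref{Th19}.
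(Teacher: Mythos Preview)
Your proposal is correct and is exactly the approach the paper takes: the paper's entire proof of Theorem~\ref{Th20} is the phrase ``Similarly we get'', meaning one reruns the argument of Theorem~\ref{Th19} with inequality~(\ref{2.4}) of Theorem~\ref{Th6} in place of inequality~(\ref{1.4}). Your write-up spells this out in more detail than the paper does, but the logic, the augmented tuples $(\mathbf{y},\mathbf{a})$ and $(\mathbf{z},\mathbf{b})$, and the verification of the Jensen--Steffensen condition via the definition of $m^{\ast}$ and $M^{\ast}$ are all identical.
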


\bigskip

\bigskip

\end{document}